\def\N{\mathbb N}
\newcommand{\Z}{\mathbb{Z}}
\def \E{\mathbb{E}}
\def \m{\vec}
\def\mi{{\vec{i}}}
\def\mj{{\vec{j}}}
\newcommand{\Mod}[1]{\ (\text{mod}\ #1)}
\newcommand{\ZD}{\mathbb{Z}^d}
\newcommand{\cA}{\mathcal{A}}
\DeclareMathOperator{\dist}{dist}
\DeclareMathOperator{\degree}{deg}
\DeclareMathOperator{\lines}{Line}
\newcommand{\bo}{B}
\newtheorem{thm}{Theorem}[section]
\newtheorem{prop}[thm]{Proposition}
\newtheorem{cor}[thm]{Corollary}
\newtheorem{question}[thm]{Question}
\title{Mixing properties of colorings of the $\Z^d$ lattice}
\author{
Noga Alon
\and
Raimundo Brice\~no
\and
Nishant Chandgotia
\and
Alexander Magazinov
\and
Yinon Spinka
}
\address{Department of Mathematics, Princeton University, Princeton, NJ 08544, USA and Schools of Mathematics
and Computer Science, Tel Aviv University, Tel Aviv 6997801}
\email{nogaa@tau.ac.il}
\address{School of Mathematical Sciences, Tel Aviv University, Tel Aviv 69978, Israel}
\email{raimundo@alumni.ubc.ca}
\address{School of Mathematical Sciences, Hebrew University of Jerusalem, Israel}
\email{nishant.chandgotia@gmail.com}
\address{Higher School of Economics, National Research University, 6 Usacheva str., Moscow 119048, Russia}
\email{amagazinov@hse.ru}
\address{University of British Columbia, Department of Mathematics, Vancouver, BC V6T 1Z2, Canada}
\email{yinon@math.ubc.ca}
\subjclass[2010]{Primary 05C15; Secondary 37B10}
\keywords{Proper colorings, list-colorings, frozen colorings, mixing properties}
\begin{document}

\begin{abstract}
We study and classify proper $q$-colorings of the $\Z^d$ lattice, identifying three regimes where different combinatorial behavior holds: (1) When $q\le d+1$, there exist frozen colorings, that is, proper $q$-colorings of $\Z^d$ which cannot be modified on any finite subset. (2) We prove a strong list-coloring property which implies that, when $q\ge d+2$, any proper $q$-coloring of the boundary of a box of side length $n \ge d+2$ can be extended to a proper $q$-coloring of the entire box. (3) When $q\geq 2d+1$, the latter holds for any $n \ge 1$. Consequently, we classify the space of proper $q$-colorings of the $\Z^d$ lattice by their mixing properties.
\end{abstract}
\maketitle

\section{Introduction}
\label{section1}

A \emph{proper coloring} of a graph $G$ is an assignment of a color (say a number in~$\Z$) to each vertex of $G$ so that adjacent vertices are assigned different colors. For an integer $q \ge 2$, a (proper) \emph{$q$-coloring} of $G$ is a proper coloring in which all colors belong to a fixed set of size~$q$, e.g., $\{0,1,\dots,q-1\}$.

In this work, we mainly consider the $d$-dimensional integer lattice $\ZD$ for $d \ge 1$. We view it both as the group and its Cayley graph with respect to the standard generators. Notice that, in this case, the number of neighbors of each vertex is $2d$.

Our first result is about frozen $q$-colorings.
A $q$-coloring of $\Z^d$ is \emph{frozen} if any $q$-coloring of $\Z^d$ which differs from it on finitely many sites is identical to it. The existence of a frozen $q$-coloring precludes the possibility of any reasonable mixing property.


\begin{thm}
\label{thm:frozen-colorings}
There exist frozen $q$-colorings of $\Z^d$ if and only if $2 \le q \le d+1$.
\end{thm}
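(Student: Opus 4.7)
The plan is to prove the two directions separately: constructing frozen colorings when $q \le d+1$, and ruling them out when $q \ge d+2$.

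For the ``if'' direction, I would give an explicit linear construction. Since $d \ge q-1$, one may choose $a_1, \dots, a_d \in \{1, \dots, q-1\}$ so that every nonzero residue modulo $q$ occurs among the $a_i$, and define
\[
c(x_1, \dots, x_d) = \sum_{i=1}^d a_i x_i \pmod q.
\]
Each $a_i \not\equiv 0$, so $c$ is proper. To see $c$ is frozen, suppose to the contrary that $c'$ is another proper $q$-coloring with $F := \{x \in \ZD : c'(x) \ne c(x)\}$ finite and nonempty, and put $\delta = (c' - c) \bmod q$, so that $\delta$ vanishes off $F$. Pick $v \in F$ maximizing $v_1 + \cdots + v_d$. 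For each $i$, the vertex $v + e_i$ has strictly larger coordinate sum, so $v + e_i \notin F$ and $\delta(v + e_i) = 0$. Properness of $c'$ on the edge $\{v, v+e_i\}$ then gives
\[
0 \not\equiv c'(v+e_i) - c'(v) \equiv a_i - \delta(v) \pmod q,
\]
so $\delta(v) \not\equiv a_i \pmod q$ for every $i$. Since $\{a_i\}_{i=1}^d$ exhausts $\{1, \dots, q-1\}$, this forces $\delta(v) \equiv 0$, contradicting $v \in F$.

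For the ``only if'' direction, let $c$ be any proper $q$-coloring of $\ZD$ with $q \ge d+2$; we must produce a finite modification. If $q \ge 2d+2$, any vertex $v$ has at most $2d$ distinct colors in its neighborhood, and $c(v)$ itself is the only additional forbidden color, leaving $q - 1 - 2d \ge 1$ colors available for recoloring $v$. The delicate regime is $d+2 \le q \le 2d+1$: here each vertex may be ``locally rainbow'' (all $q-1$ other colors already appear among its neighbors), and no single site can be changed in isolation. The plan in this range is to invoke the strong list-coloring / boundary-extension property announced in item (2) of the abstract: applied to a sufficiently large box $B_n$ with boundary colors pinned to the restriction of $c$, while the list at one chosen interior vertex $v_0$ is truncated to $\{0, \dots, q-1\} \setminus \{c(v_0)\}$ (still of size $q-1 \ge d+1$), it should produce a valid proper extension that necessarily disagrees with $c$ at $v_0$, yielding a finite modification.

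The principal obstacle is precisely the intermediate regime $d+2 \le q \le 2d+1$: the naive single-site pigeonhole of the $q \ge 2d+2$ case breaks down, and one genuinely has to rely on the technical list-coloring machinery --- and its flexibility in accommodating a truncated list at one vertex while the boundary is held fixed --- developed in the body of the paper.
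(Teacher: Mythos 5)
Your ``if'' direction is correct and is essentially the paper's construction: the paper takes $x_{\mi}=\sum_k k i_k \bmod q$ for $q=d+1$ and then handles $q<d+1$ by building the coloring on $\Z^{q-1}$ and lifting it to $\Z^d$ by collapsing coordinates, whereas you achieve the same thing in one step by allowing the coefficients $a_1,\dots,a_d$ to repeat while still covering all nonzero residues. The maximization-of-coordinate-sum argument is identical. This part is fine.

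The ``only if'' direction has a genuine gap, and it sits exactly at the critical value $q=d+2$. Your plan is to pin $c$ on $\partial[n]^d$ and truncate the list at one interior vertex $v_0$ to $\{0,\dots,q-1\}\setminus\{c(v_0)\}$, then apply Theorem~\ref{thm:list-coloring}. But that theorem requires $|S_{\mi}|\ge L_n^d(\mi)$, and at a vertex whose coordinate profile gives $L_n^d(\mi)=t+2$, the vertex has exactly $d-t$ neighbors on $\partial[n]^d$; after deleting those pinned colors and additionally deleting $c(v_0)$, the list at $v_0$ has size at least $q-(d-t)-1=t+1$ when $q=d+2$ --- one short of $L_n^d(v_0)=t+2$, no matter where in the box you place $v_0$ (deep interior, face, or corner). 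Your inequality ``$q-1\ge d+1$'' is not the relevant threshold: the interior requirement is $d+2$. So for $q=d+2$ the hypotheses of the list-coloring theorem are simply not met, and that is the only case your argument needs (for $q\ge d+3$ the truncation does fit, and for $q\ge 2d+2$ your single-site pigeonhole already works). The paper sidesteps this in two ways. First, its actual proof of non-existence (Proposition~\ref{prop:no-frozen-colorings2} and Corollary~\ref{cor:frozen-colorings}) does not use list-coloring at all: it is an elementary count of bi-color components showing that if $(q-1)|F|>|\E_F|+|\E(F,G\setminus F)|$ then some Kempe chain lies entirely inside $F$ and its two colors can be swapped; taking $F=[n]^d$ with $n$ large gives the result for all $q\ge d+2$ (and for much more general graphs). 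Second, when the paper does derive non-frozenness from list-coloring (via fillability, FEP and strong irreducibility in Propositions~\ref{prop:list_coloring}--\ref{prop:fillable_to_FEP} and Theorem~\ref{thm:mixing}), the forced new color at $v_0$ is imposed as part of the \emph{boundary data} of boxes covering $B_N\setminus\{v_0\}$, so that it is one of the at most $d-t$ external-neighbor constraints already budgeted for in $L_n^d$, rather than an extra deletion from an interior list. To repair your argument you should either adopt the Kempe-chain count or restructure the recoloring so that $v_0$ lies on the boundary of the regions being refilled.
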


Since there are no frozen $q$-colorings of $\Z^d$ when $q \geq d+2$, a natural question is how unconstrained are proper colorings in this regime.
In order to understand better what happens when $q \geq d+2$, we show that a certain list-coloring property of large boxes in $\Z^d$ holds. A consequence of this property will be that whenever $q \geq d+2$, large boxes in $\Z^d$ have the property that any partial proper $q$-coloring of their boundary can be extended to a proper $q$-coloring of (the interior of) the entire box. To state the list-coloring result precisely, we now introduce some definitions.

For a graph $G$ and a function $L: G\to \N$, we say that $G$ is \emph{$L$-list-colorable} if for any collection of sets -- also called \emph{lists} -- $\{S_v\}_{v\in G}$ with $|S_v| \geq L(v)$, there exists a proper coloring $f$ of $G$ such that $f(v) \in S_v$ for all $v \in G$.

Denote $[n] := \{1,\dots,n\}$. Depending on the context, $[n]^d = \{1,\dots,n\}^d$ may also be interpreted as an induced subgraph of $\Z^d$.
Define $L_n^d: [n]^d \to \{2,\dots,d+2\}$ by
\[ L_n^d(\mi):=2 + |\{1\leq k \leq d~:~ 1<|i_k|< n\}|.\]

Our second result is the following.

\begin{thm}\label{thm:list-coloring}
The graph $[n]^d$ is $L_n^d$-list-colorable whenever $n \ge d+2$.
\end{thm}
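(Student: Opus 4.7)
The plan is to induct on the dimension $d$. For $d = 1$, $[n]$ is a path and the lists $(2,3,\dots,3,2)$ can be colored by a greedy walk from one endpoint. For $d = 2$, a direct calculation shows $L_n^2(\vec i)$ equals the degree of $\vec i$ in $[n]^2$; since $[n]^2$ is connected and contains $4$-cycles (so its core is not a Gallai tree), the Erd\H{o}s--Rubin--Taylor degree-choosability theorem applies and handles this case.

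For the inductive step with $d \geq 3$, I would slice $[n]^d$ into the $n$ parallel slabs $S_j := \{\vec i \in [n]^d : i_1 = j\}$, each naturally identified with $[n]^{d-1}$. Direct computation gives $L_n^d|_{S_j} = L_n^{d-1}$ on the boundary slabs $S_1, S_n$ and $L_n^d|_{S_j} = L_n^{d-1} + 1$ on the interior slabs $S_j$ for $2 \leq j \leq n-1$ (because then the coordinate $i_1 = j$ itself contributes an extra interior direction to $L_n^d$). I would then color the slabs in the order $S_1, S_2, \dots, S_n$; on each slab one subtracts from each vertex's list the single color of its back-neighbor in the preceding slab, leaving effective lists of size at least $L_n^{d-1}$. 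Since $n \geq d+2$ implies $n \geq (d-1)+2$, the inductive hypothesis applies and yields a proper list coloring of each slab--except for the last slab $S_n$, where a corner's effective list shrinks to size only $1$, one short of what the induction requires.

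The crux of the proof is sidestepping this last-slab obstruction. My plan is to color the penultimate slab $S_{n-1}$ with an extra demand: at each vertex $(n-1, \vec k)$, choose the color so that it lies \emph{outside} the list of its partner $(n, \vec k)$ in $S_n$. Since $|S_{(n-1,\vec k)}| \geq L_n^{d-1}(\vec k) + 1$ and $|S_{(n,\vec k)}| \geq L_n^{d-1}(\vec k)$, a pigeonhole argument gives at least one such `safe' color per vertex; if such a safe color is always picked, then $S_n$'s effective lists stay at full size $L_n^{d-1}$ and the induction closes. Making this safe-color selection globally consistent both with proper coloring of $S_{n-1}$ itself and with the back-neighbor constraint from $S_{n-2}$ will require a strengthened inductive hypothesis allowing a single prescribed `forbidden' color per vertex in the list-coloring of $[n]^{d-1}$ (a weak variant of correspondence-coloring). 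Formulating this strengthened hypothesis so that it propagates cleanly through the slab induction--and verifying its base cases $d=1,2$--is the main obstacle, and it is here that the bound $n \geq d+2$ is used essentially, to provide enough interior depth along the slab direction for the extra forbidden-color constraint to be maintained at each level of the recursion.
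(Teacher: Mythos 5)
Your slab decomposition and the computation $L_n^d(j,\vec k)=L_n^{d-1}(\vec k)+[1<j<n]$ are correct, and the base cases $d=1,2$ are fine (for $d=2$, $L_n^2$ is exactly the degree function and $[n]^2$ is not a Gallai tree). But the step you yourself flag as "the main obstacle" is not a technicality to be formulated later --- it is the entire content of the theorem, and the fix you sketch is quantitatively wrong. To keep the effective lists on $S_n$ at full size $L_n^{d-1}(\vec k)$, the vertex $(n-1,\vec k)$ must receive a color outside the \emph{entire} list of its partner $(n,\vec k)$, which (after the standard truncation to exact sizes) has $L_n^{d-1}(\vec k)$ colors, not one. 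Together with the back-neighbor constraint from $S_{n-2}$, the set of admissible colors at $(n-1,\vec k)$ has size at least $(L_n^{d-1}(\vec k)+1)-L_n^{d-1}(\vec k)-1=0$: it can be empty, and is generically a single color. A proper coloring of the slab $S_{n-1}\cong[n]^{d-1}$ cannot be built from lists of size at most one, so the "strengthened inductive hypothesis allowing a single prescribed forbidden color per vertex" does not model the actual constraint and cannot close the induction. No reordering of the slabs escapes this: whichever slab is colored last loses one color to an already-colored neighboring slab, leaving a deficiency of exactly one everywhere on it.

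A further warning sign that the missing piece is essential: nowhere in your sketch is $n\ge d+2$ actually used beyond the trivial $n\ge(d-1)+2$, yet the statement is genuinely false for small $n$ --- the paper exhibits lists showing $[2]^3$ is not $L_2^3$-list-colorable, and your slab argument applied to $[2]^3$ reproduces exactly the fatal last-slab deficiency. So any correct completion must invoke $n\ge d+2$ precisely in the step you have left open. For comparison, the paper avoids slabs entirely: it reduces list-colorability to finding an orientation with out-degrees below $L$ (via the Alon--Tarsi theorem for digraphs with no odd directed cycles), which by a Hall-type argument reduces to verifying $\sum_{v\in H}(L_n^d(v)-1)\ge|\E_H|$ for \emph{every} induced subgraph $H\subset[n]^d$; this global condition over all subgraphs is where the difficulty is absorbed, and it is checked by stratifying $H$ according to the level sets of $L_n^d$ and counting axis-parallel lines, which is exactly where the hypothesis $n-2\ge d$ enters. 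If you want to salvage a dimension-induction, you would need an inductive statement at least as strong as that subgraph inequality; as written, the proposal has a genuine gap at its crux.
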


The above results have implications for the mixing properties of the set of all $q$-colorings. Specifically, we characterise when the set of $q$-colorings of $\Z^d$ is topologically mixing, strongly irreducible, has the finite extension property and is topologically strong spatial mixing (see Section \ref{section4} for definitions and results). Mixing properties have important consequences in statistical physics~\cite{burtonsteiffnonuniquesft}, dynamical systems~\cite{MR2645044}, and the study of constrained satisfaction problems~\cite{briceno2019long}. We discuss some of these aspects in Section \ref{section5}.

\medskip
The rest of the paper is organized as follows.
Section~\ref{section2} is dedicated to frozen $q$-colorings and, in particular, contains the proof of Theorem~\ref{thm:frozen-colorings}. In addition, we also prove a result about non-existence of frozen $q$-colorings in general graphs satisfying appropriate expansion properties.
Section~\ref{section3} is dedicated to list-colorings and, in particular, contains the proof of Theorem~\ref{thm:list-coloring}. In Section~\ref{section4}, we introduce a hierarchy of mixing properties and, using the previous results, show that for a fixed dimension $d$, there exist two critical numbers of colors, namely $q = d+1$ and $q = 2d$, that determine three different mixing regimes.
Finally, in Section~\ref{section5}, we conclude with a discussion and open questions.

\medskip
We end this section with some notation that will be used throughout the paper.
The set of edges of a graph $G$ is denoted by $\E_G$. Given $U, V\subset G$, we denote by $\E(U,V) \subset \E_G$ the set of edges between a vertex of $U$ and a vertex of~$V$.
Given a set $U \subset G$, we denote the \emph{external vertex boundary} of $U$ by
\[ \partial U:=\{v \in G \setminus U~:~v\text{ is adjacent to some $u \in U$}\} .\]

%


\section{Frozen $q$-colorings}
\label{section2}

In this section, we prove Theorem~\ref{thm:frozen-colorings}. We split the proof into two parts: existence and non-existence of frozen $q$-colorings. Theorem~\ref{thm:frozen-colorings} is a direct consequence of Proposition~\ref{prop:frozen-colorings} and Corollary~\ref{cor:frozen-colorings} below. We begin with the existence of frozen $q$-colorings when $q \le d+1$. Many constructions have appeared in the past which are similar in principle (see, e.g., \cite[Section 8]{brightwell2000gibbs}).

\begin{prop}\label{prop:frozen-colorings}
There exist frozen $q$-colorings of $\Z^d$ for any $2 \le q \le d+1$.
\end{prop}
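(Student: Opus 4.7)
The plan is to exhibit, for each $q$ with $2 \le q \le d+1$, an explicit frozen $q$-coloring and establish rigidity by a one-step extremal argument. The coloring I would use is linear: pick coefficients $a_1,\dots,a_d \in \{1,\dots,q-1\}$ whose residues modulo $q$ cover every nonzero class, for instance $a_k:=k$ for $1\le k \le q-1$ and $a_k:=1$ for $q\le k\le d$ (possible exactly because $d\ge q-1$), and define
\[ c(\mi) \;:=\; \sum_{k=1}^d a_k\, i_k \pmod q. \]
Since every $a_k$ is nonzero modulo $q$, any two lattice neighbors of $\Z^d$ receive distinct colors, so $c$ is a proper $q$-coloring.

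To see $c$ is frozen, I would suppose for contradiction that there is a proper $q$-coloring $c'$ of $\Z^d$ which agrees with $c$ off some finite set, yet whose disagreement set $B:=\{\mj\in\Z^d : c(\mj)\neq c'(\mj)\}$ is nonempty. Let $\vec{v}$ be the lexicographically largest element of $B$. For every $k\in\{1,\dots,d\}$, the vertex $\vec{v}+e_k$ is lex-strictly larger than $\vec{v}$, hence $\vec{v}+e_k\notin B$, and consequently
\[ c'(\vec{v}+e_k) \;=\; c(\vec{v}+e_k) \;=\; c(\vec{v})+a_k \pmod q. \]
By the design of the $a_k$'s, the collection $\{\,c(\vec{v})+a_k : 1\le k\le d\,\}$ is exactly $\{0,1,\dots,q-1\}\setminus\{c(\vec{v})\}$. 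Since $c'$ is a proper coloring, $c'(\vec{v})$ must avoid every value $c'(\vec{v}+e_k)$, which leaves only $c'(\vec{v})=c(\vec{v})$, contradicting $\vec{v}\in B$.

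The only non-routine step is the combinatorial choice of the coefficients so that the $d$ forward neighbors of every vertex, taken together, display every color other than the vertex's own; this is precisely where the hypothesis $q\le d+1$ enters and matches the sharp threshold in Theorem~\ref{thm:frozen-colorings}. Once the $a_k$'s are chosen, the lexicographic extremum of the finite disagreement set delivers the contradiction immediately, and I never need to look at the backward neighbors $\vec{v}-e_k$ or iterate the extremal argument.
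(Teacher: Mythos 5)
Your proof is correct and takes essentially the same approach as the paper: a linear coloring $\mi \mapsto \sum_k a_k i_k \pmod q$ whose forward neighbors $\vec{v}+e_k$ exhaust all colors other than $c(\vec{v})$, combined with an extremal choice from the finite disagreement set (you use the lexicographic maximum, the paper maximizes $\sum_k i_k$; both work). The only difference is cosmetic: the paper proves the case $q=d+1$ this way and then handles $q<d+1$ by inductively lifting a frozen coloring of $\Z^{q-1}$ to higher dimensions, whereas you choose the coefficients $a_k$ directly so that all $2\le q\le d+1$ are covered uniformly by the same argument.
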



\begin{proof}
First suppose that $q = d+1$ and let $x\in \{0,1, \ldots, q-1\}^{\Z^d}$ be given by 
\[ x_{\mi}:= \sum_{k=1}^d ki_k \Mod{q}  \qquad\text{for all } \mi \in \ZD .\]

First, let us verify that $x$ is a $q$-coloring. Indeed, for $\mi \in \ZD$ and $\m e_k$ the unit vector in the $k$th direction ($1 \le k \le d$), we have that
$$x_{\mi+\m e_k}= x_{\mi}+k \Mod{q} .$$
In particular, $x_{\mi+\m e_k} - x_{\mi} = k \neq 0 \Mod{q}$, and thus adjacent vertices have different colors.

Let us now show that $x$ is frozen. Suppose that $y$ is a $q$-coloring that differs from $x$ on finitely many sites.
Among all $\mi$ such that $x_{\mi}\neq y_{\mi}$, we choose one which maximizes $\sum_{k=1}^d i_k$. Then, for any $1 \le k \le d$,
$$y_{\mi +\m e_k}=x_{\mi+\m e_k}= x_{\mi}+k \Mod{q} . $$
Therefore,
$$
\{y_{\mi +\m e_k}~:~ 1\leq k\leq d\}=\{0,1, \ldots, q-1\}\setminus \{x_\mi\} .$$
Since $y$ is a proper $q$-coloring, it must be that $y_{\mi}=x_\mi$, contradicting the choice of~$\mi$.

Now, to deal with the case $q < d+1$, notice that by the previous construction, we already have a frozen $q$-coloring of $\Z^{q-1}$. Thus, it suffices to prove that we can always extend a frozen $q$-coloring $x$ of $\Z^r$ to $\Z^{r+1}$, as we may then proceed by induction on $r$.  Given a frozen $q$-coloring $x$ of $\Z^r$, consider the $q$-coloring $y$ of $\Z^{r+1}$ defined as
$$y_{(i_1,\dots,i_{r+1})} := x_{(i_1,\dots,i_{r-1},i_r + i_{r+1})},$$
which is clearly proper, since if $(i_1,\dots,i_{r+1})$ is adjacent to $(j_1,\dots,j_{r+1})$ in $\Z^{r+1}$, then $(i_1,\dots,i_{r-1},i_r+i_{r+1})$ is adjacent to $(j_1,\dots,j_{r-1},j_r+j_{r+1})$ in $\Z^r$. Notice that $y$ is also frozen, since $y$ restricted to $\Z^r \times \{i_{r+1}\}$ can be seen as a translate of the frozen $q$-coloring $x$.
\end{proof}

Next we prove that no frozen $q$-coloring exists when $q \ge d+2$. While this follows from the list-coloring result given in Theorem~\ref{thm:list-coloring} (see also Theorem \ref{thm:mixing}), we give a direct argument here which applies in greater generality. Let us now state the result precisely.

The \emph{edge-isoperimetric constant} of a graph $G$ is
\[ h(G) := \inf_F \frac{|\E(F, G \setminus F)|}{|F|} ,\]
where the infimum is taken over all non-empty finite subsets of vertices $F$.
As for $\Z^d$, a $q$-coloring of $G$ is \emph{frozen} if any $q$-coloring of $G$ which differs from it on finitely many sites is identical to it.

\begin{prop}\label{prop:no-frozen-colorings}
Let $G$ be a graph of maximum degree $\Delta$ and $q > \frac12 \Delta + \frac12 h(G) + 1$. Then there do not exist frozen $q$-colorings of $G$.
\end{prop}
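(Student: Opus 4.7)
The plan is to assume for contradiction that $c$ is a frozen $q$-coloring of $G$ and, by a Kempe-chain counting argument, show that every finite nonempty $F \subseteq G$ satisfies $|\E(F, G \setminus F)|/|F| \ge 2(q-1) - \Delta$. Taking the infimum over $F$ then yields $h(G) \ge 2(q-1) - \Delta$, which directly contradicts the hypothesis $q > \tfrac12\Delta + \tfrac12 h(G) + 1$.

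The first step is the standard Kempe observation: for any two colors $a \ne b$, let $H_{a,b}$ denote the subgraph of $G$ induced by the vertices with color $a$ or $b$. If any connected component of $H_{a,b}$ were finite, swapping $a$ and $b$ on it would produce a proper $q$-coloring differing from $c$ on a finite set, contradicting frozenness. Hence every connected component of every $H_{a,b}$ is infinite.

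Fix a finite nonempty $F$. For each unordered pair $\{a,b\}$, let $F_{a,b} := \{v \in F : c(v) \in \{a,b\}\}$ and let $K_{a,b}$ denote the number of connected components of $H_{a,b}$ restricted to $F_{a,b}$. Each such component is contained in an infinite component of $H_{a,b}$ and hence must be incident to an edge of $H_{a,b}$ whose other endpoint lies in $G \setminus F$. The $F$-endpoint of any such boundary edge lies in a unique component, so distinct components give rise to distinct boundary edges, and each edge of $\E(F, G \setminus F)$ belongs to exactly one color-pair class (the one determined by the colors of its endpoints). This yields
$$\sum_{\{a,b\}} K_{a,b} \le |\E(F, G \setminus F)|.$$
On the other hand, any graph on $n$ vertices with $m$ edges has at least $n - m$ components, so applying this to each $H_{a,b}[F_{a,b}]$, summing, and using that each vertex of $F$ lies in exactly $q-1$ sets $F_{a,b}$ while each edge of $G[F]$ belongs to exactly one color-pair class, gives
$$\sum_{\{a,b\}} K_{a,b} \ge (q-1)|F| - |\E(G[F])|.$$

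Combining the two bounds and substituting the trivial degree identity $2|\E(G[F])| + |\E(F, G \setminus F)| \le \Delta|F|$ yields the desired isoperimetric estimate after a one-line rearrangement. The main conceptual input is the two-sided control of $\sum K_{a,b}$; the step I would verify most carefully is the injectivity in the upper bound -- namely that distinct components of $H_{a,b}[F_{a,b}]$ really do use distinct edges of $\E(F, G \setminus F)$, which holds precisely because the $F$-endpoint of any boundary edge lies in a unique component.
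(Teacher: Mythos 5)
Your proof is correct and is essentially the paper's argument: both are Kempe-chain double counts, with your bounds $\sum_{\{a,b\}}K_{a,b}\le|\E(F,G\setminus F)|$ and $\sum_{\{a,b\}}K_{a,b}\ge(q-1)|F|-|\E_{G[F]}|$ combining to the same inequality $(q-1)|F|\le|\E_{G[F]}|+|\E(F,G\setminus F)|$ that the paper derives by counting vertices of $F$ inside bi-color components of the graph on $F\cup\partial F$. The only cosmetic difference is that the paper phrases the local step as a statement about colorings ``frozen on $F$'' (which also yields information for finite graphs), while you pass directly to the isoperimetric contradiction.
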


Since, in the $\Z^d$ case, $\Delta = 2d$ and $h(\Z^d) = 0$ (e.g., consider sets of the form $F = [n]^d$ for increasing $n$), we have the following corollary.

\begin{cor}\label{cor:frozen-colorings}
There do not exist frozen $q$-colorings of $\Z^d$ for any $q \geq d+2$.
\end{cor}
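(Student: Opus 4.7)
The plan is to deduce the corollary directly from Proposition~\ref{prop:no-frozen-colorings} by computing the two graph invariants $\Delta$ and $h(\Z^d)$ that appear in its hypothesis, and then verifying that the threshold works out to $d+1$. Since Proposition~\ref{prop:no-frozen-colorings} has already been established, the task is purely to specialize its parameters.

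First I would record that in the Cayley graph of $\Z^d$ with the standard generating set, each vertex $\mi$ has neighbors $\mi \pm \m e_k$ for $1 \le k \le d$, so the graph is $2d$-regular and in particular $\Delta = 2d$. Next I would verify that $h(\Z^d) = 0$ by exhibiting a sequence of test sets along which the edge-isoperimetric ratio goes to zero. Take $F_n := [n]^d$. Its cardinality is $|F_n| = n^d$. The edges from $F_n$ to $\Z^d \setminus F_n$ correspond to coordinate directions in which a vertex of $F_n$ lies at an extremal face of the cube; each of the $2d$ faces contributes $n^{d-1}$ such edges, so $|\E(F_n, \Z^d \setminus F_n)| = 2d\, n^{d-1}$. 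Hence
\[
h(\Z^d) \;\le\; \frac{|\E(F_n,\Z^d\setminus F_n)|}{|F_n|} \;=\; \frac{2d}{n} \;\xrightarrow[n\to\infty]{}\; 0,
\]
and since $h(\Z^d) \ge 0$ by definition, it equals $0$.

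With these two values in hand, the hypothesis $q > \tfrac12 \Delta + \tfrac12 h(G) + 1$ of Proposition~\ref{prop:no-frozen-colorings} becomes $q > \tfrac12 (2d) + 0 + 1 = d+1$, which is equivalent to $q \ge d+2$ since $q$ is an integer. Applying the proposition to $G = \Z^d$ then gives the desired conclusion.

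There is no real obstacle here: the only computation is the isoperimetric one, and the argument amounts to noting that $\Z^d$ is amenable and that $[n]^d$ is an obvious F\o lner sequence. The value of this corollary is not in its difficulty but in confirming that the general bound of Proposition~\ref{prop:no-frozen-colorings} is sharp for $\Z^d$, matching the construction of Proposition~\ref{prop:frozen-colorings} exactly at $q = d+1$.
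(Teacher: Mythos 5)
Your proof is correct and matches the paper's own derivation of this corollary: the paper likewise deduces it from Proposition~\ref{prop:no-frozen-colorings} by noting that $\Delta = 2d$ and that $h(\Z^d) = 0$, using the sets $F = [n]^d$ as witnesses. Your only addition is to spell out the isoperimetric computation $|\E(F_n,\Z^d\setminus F_n)| = 2d\,n^{d-1}$ explicitly, which is accurate.
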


Proposition~\ref{prop:no-frozen-colorings} is sharp in many cases, including $\Z^d$ (Theorem~\ref{thm:frozen-colorings}), the triangular lattice (see Figure~\ref{figure:frozen}), the honeycomb lattice (trivially, since a $2$-coloring of an infinite connected bipartite graph is always frozen), and regular trees (see \cite{brightwell2002random,jonasson2002uniqueness}).

\begin{figure}[t]
\includegraphics[width=0.85\textwidth]{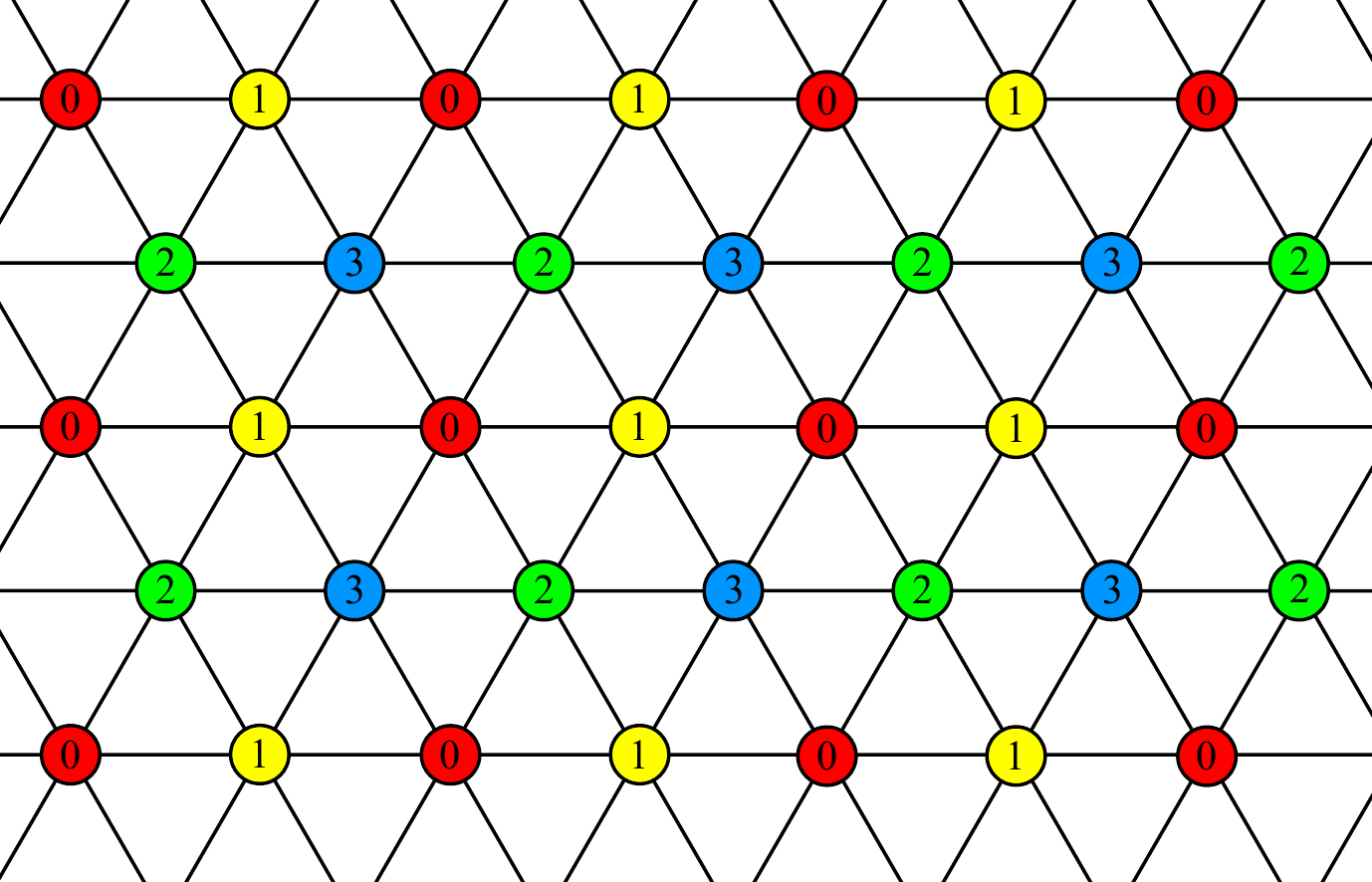}
\caption{A frozen $4$-coloring of the triangular lattice.}
\label{figure:frozen}
\end{figure}

We remark that finite (non-empty) graphs never admit frozen $q$-colorings since the colors can always be permuted. Proposition~\ref{prop:no-frozen-colorings} is thus trivial for finite graphs. Nevertheless, our argument can provide meaningful information for finite graphs as well. Proposition~\ref{prop:no-frozen-colorings} is an immediate consequence of the following.

Given a subset $F$ of vertices of $G$, we say that a $q$-coloring of $G$ is \emph{frozen on $F$} if any $q$-coloring of $G$ which differs from it on a subset of $F$ is identical to it. Thus, a $q$-coloring is frozen if and only if it is frozen on every finite set.


\begin{prop}\label{prop:no-frozen-colorings2}
Let $G$ be a graph, $q \ge 2$, and $F \subset G$ finite.
If $(q - 1) |F| > |\E_F| + |\E(F, G\setminus F)|$, then no $q$-coloring of $G$ is frozen on $F$.
\end{prop}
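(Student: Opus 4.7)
The plan is to proceed by contradiction and to reformulate the statement as a list-coloring uniqueness problem. Suppose some proper $q$-coloring $c$ of $G$ is frozen on $F$, and for each $v \in F$ set $L_v := [q] \setminus \{c(u) : u \in N(v) \cap (G\setminus F)\}$, so that $|L_v| \ge q - \deg_{G\setminus F}(v)$ and $c|_F$ is a proper $L$-list-coloring of the subgraph of $G$ induced by $F$. The frozen assumption says precisely that $c|_F$ is the \emph{unique} such $L$-list-coloring.

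The combinatorial heart of the argument is the following assertion, which I would prove via the polynomial method: if a graph $H$ with lists $\{L_v\}$ admits a unique proper $L$-list-coloring $c$, then $\sum_v(|L_v|-1) \le |\E_H|$. To establish it, consider the graph polynomial
\[
P(\{x_v\}) \;:=\; \prod_{uv\in\E_H}(x_v - x_u)
\]
(with any fixed orientation of the edges); its total degree equals $|\E_H|$, and for $x \in \prod_v L_v$, $P(x)\ne 0$ iff $x$ is a proper $L$-list-coloring. Uniqueness therefore forces $P$ to vanish on $\prod_v L_v \setminus \{c\}$ while $P(c)\ne 0$.

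Reduce $P$ modulo the ideal $\bigl(\prod_{a \in L_v}(x_v-a) : v \in V(H)\bigr)$ to obtain a polynomial $\bar P$ with $\deg_{x_v}\bar P \le |L_v|-1$ for every $v$ and agreeing with $P$ as a function on $\prod_v L_v$. Each reduction step replaces a factor $x_v^{|L_v|}$ by a polynomial of strictly smaller $x_v$-degree, so the total degree cannot increase and $\deg \bar P \le \deg P = |\E_H|$. On the other hand, reduced polynomials are in bijection, via evaluation, with functions on $\prod_v L_v$, so $\bar P$ must equal the Lagrange delta at $c$ scaled by $P(c)$:
\[
\bar P \;=\; P(c) \prod_{v}\prod_{a \in L_v \setminus \{c(v)\}} \frac{x_v-a}{c(v)-a},
\]
which has total degree exactly $\sum_v(|L_v|-1)$. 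Comparing the two bounds on $\deg \bar P$ gives $\sum_v(|L_v|-1) \le |\E_H|$.

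Applied with $H$ the subgraph induced on $F$, and combined with $|L_v| \ge q - \deg_{G\setminus F}(v)$ and $\sum_{v\in F}\deg_{G\setminus F}(v) = |\E(F,G\setminus F)|$, this yields $(q-1)|F| \le |\E_F| + |\E(F,G\setminus F)|$, contradicting the hypothesis. I expect the main subtle step to be the polynomial argument — specifically, verifying that reduction modulo the ideal drives every individual degree below $|L_v|$ without ever raising the total degree, and then identifying $\bar P$ as the scaled Lagrange delta. Both are standard consequences of Lagrange interpolation together with the triangularity of the reduction procedure, so once they are in hand the contradiction is immediate.
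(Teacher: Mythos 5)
Your proof is correct, but it takes a genuinely different route from the paper's. The paper argues combinatorially: it decomposes the edges of $F\cup\partial F$ into bi-color (Kempe) components, observes that freezing forces every component to reach $\partial F$ (else its two colors could be swapped) and hence to contain at most $|\E_A|$ vertices of $F$, and double-counts incidences using the fact that each vertex of $F$ lies in exactly $q-1$ components. You instead reduce ``frozen on $F$'' to the statement that $c|_F$ is the \emph{unique} proper $L$-list-coloring of the subgraph induced on $F$ (this reformulation is exactly right), and then prove the clean general lemma that unique $L$-list-colorability of $H$ forces $\sum_v(|L_v|-1)\le|\E_H|$ via the graph polynomial and reduction modulo $\prod_{a\in L_v}(x_v-a)$; the degree bookkeeping and the identification of $\bar P$ with the scaled Lagrange delta are both sound, and the final arithmetic $(q-1)|F|-|\E(F,G\setminus F)|\le\sum_{v\in F}(|L_v|-1)\le|\E_F|$ delivers the contradiction. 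Your approach has the virtue of isolating a lemma of independent interest that fits naturally alongside the Alon--Tarsi machinery the paper already invokes in Section~\ref{section3}, and it generalizes immediately to any list assignment rather than just the one induced by a boundary coloring. What it gives up is constructiveness: the paper's argument exhibits an explicit local modification (a Kempe chain move supported in $F$), which is the content of the remark following their proof and is used again in the discussion of Kempe move connectivity in Section~\ref{section5}; the polynomial method only certifies that a second coloring exists.
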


Since $2|\E_F| + |\E(F, G\setminus F)|$ equals the sum of the degrees of vertices in $F$, the following is an immediate corollary to Proposition~\ref{prop:no-frozen-colorings2}.

\begin{cor}\label{cor:no-frozen-colorings2}
Let $G$ be a graph of maximum degree $\Delta$, $q \ge 2$, and $F \subset G$ finite. If $(q - 1 - \frac \Delta 2) |F| > \frac12 |\E(F, G\setminus F)|$, then no $q$-coloring of $G$ is frozen on $F$.
\end{cor}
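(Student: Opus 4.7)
The plan is to prove the contrapositive by a Kempe-chain argument combined with double counting. I will assume for contradiction that some $q$-coloring $x$ of $G$ is frozen on $F$, and deduce $(q-1)|F| \le |\E_F| + |\E(F, G \setminus F)|$.

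First, for each unordered pair of distinct colors $\{a,b\}$, I consider the subgraph $G_{a,b}$ of $G$ induced by vertices with $x$-color in $\{a,b\}$, and its connected components. If any component $C$ of $G_{a,b}$ satisfies $C \cap F \ne \emptyset$ but $C \subseteq F$, then swapping the colors $a$ and $b$ on $C$ produces a distinct proper $q$-coloring differing from $x$ only on a subset of $F$: no conflict can arise across the boundary of $C$, since by maximality any neighbor of $C$ lying outside $C$ has $x$-color not in $\{a,b\}$. This would contradict the assumption that $x$ is frozen on $F$, so every component of $G_{a,b}$ that meets $F$ must also meet $G \setminus F$.

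Next, I set up a double count of
\[
S := \sum_{\{a,b\}} \sum_{\substack{C \in \mathrm{Comp}(G_{a,b}) \\ C \cap F \ne \emptyset}} |F \cap C|.
\]
Summing by $v \in F$ first, each $v$ lies in exactly one component for each of the $q-1$ pairs $\{a,b\} \ni x_v$, so $S = (q-1)|F|$. In the other direction, I will bound $|F \cap C|$ by the number of edges of $C$ lying in $\E_F \cup \E(F, G \setminus F)$; summing this bound, each edge of $\E_F$ or $\E(F, G \setminus F)$ determines a unique color pair (its endpoints' $x$-colors) and a unique component of the corresponding two-color subgraph (which automatically meets $F$), so the grand total telescopes to $|\E_F| + |\E(F, G \setminus F)|$.

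The main obstacle is the per-component bound $|F \cap C| \le |E(C) \cap (\E_F \cup \E(F, G\setminus F))|$. The naive observation that $C$ contains at least one edge of $\E(F, G\setminus F)$ charges only one unit per component, which is far too weak. My plan to sharpen it is to take any spanning tree $T$ of $C$ and contract all vertices of $C \setminus F$ to a single vertex $\ast$: the result is a connected multigraph on $|F \cap C|+1$ vertices, so it must have at least $|F \cap C|$ non-loop edges, and each non-loop edge comes from an edge of $T$ (hence of $C$) lying in $\E_F \cup \E(F, G \setminus F)$. Combining this with the first count gives $(q-1)|F| \le |\E_F| + |\E(F, G \setminus F)|$, contradicting the hypothesis and finishing the proof.
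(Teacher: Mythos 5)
Your argument is correct in substance, but note that what you actually prove is the stronger Proposition~\ref{prop:no-frozen-colorings2} (that frozenness on $F$ forces $(q-1)|F| \le |\E_F| + |\E(F,G\setminus F)|$); the paper deduces the present corollary from that proposition in one line via the handshake identity $2|\E_F| + |\E(F,G\setminus F)| = \sum_{v\in F}\deg(v) \le \Delta|F|$, and you need this same line at the end, since the inequality you derive does not literally contradict the stated hypothesis $(q-1-\tfrac{\Delta}{2})|F| > \tfrac12|\E(F,G\setminus F)|$ without it. Your Kempe-chain double count is essentially the paper's proof of the proposition, with two genuine variations. First, you take bi-color components in the full graph $G$, whereas the paper first truncates to $G' = (F\cup\partial F,\ \E_F\cup\E(F,G\setminus F))$; this forces you to prove the per-component bound $|F\cap C| \le |E(C)\cap(\E_F\cup\E(F,G\setminus F))|$ via the spanning-tree contraction, where the paper gets $|A\cap F|\le|\E_A|$ for free from connectivity of $A$ together with $A\cap\partial F\neq\emptyset$. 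Your contraction argument is sound: distinct vertices of $F\cap C$ remain distinct, the loopless contraction is connected on $|F\cap C|+1$ vertices because $C\setminus F\neq\emptyset$ by your swap argument, and its non-loop edges inject into $E(T)\cap(\E_F\cup\E(F,G\setminus F))$. Second, your bi-color subgraphs are vertex-induced rather than edge-induced, which makes the count $S=(q-1)|F|$ immediate, whereas the paper must invoke frozenness on single vertices to see that each $v\in F$ lies in exactly $q-1$ edge-induced components. Both routes work; yours is marginally more self-contained but heavier on the per-component bound, and only the missing degree-sum line needs to be added.
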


\begin{proof}[Proof of Proposition~\ref{prop:no-frozen-colorings2}]
Suppose towards a contradiction that there exists a $q$-coloring of $G$ which is frozen on $F$.
Consider the restriction of the coloring to the finite graph $G' := (F\cup \partial F, \E_F \cup \E(F, G\setminus F))$. 
For distinct colors $i$ and~$j$, let $G'_{i,j}$ be the subgraph of $G'$ consisting of edges between vertices colored $i$ and~$j$.
A \emph{bi-color component} is a connected component of any such $G'_{i,j}$.
Let $\cA$ be the collection of bi-color components.
Note that the bi-color components partition the edges of $G'$ so that $\sum_{A \in \cA} |\E_A| = |\E_F| + |\E(F, G\setminus F)|$.
Note also that, since the $q$-coloring is frozen on $F$, each $A \in \cA$ contains a vertex in $\partial F$, as otherwise, the two colors in $A$ could be swapped (contradicting that the coloring is frozen). Hence, each $A \in \cA$ contains at most $|\E_A|$ vertices of $F$. Thus, we have
\[ \sum_{A \in \cA} |A \cap F| \le \sum_{A \in \cA} |\E_A| = |\E_F| + |\E(F, G\setminus F)| .\]

On the other hand, using again that the $q$-coloring is frozen on $F$ (and hence also on each individual vertex in $F$), we see that each vertex in $F$ is contained in exactly $q-1$ bi-color components, and it follows that $\sum_{A \in \cA} |A \cap F| = (q-1) |F|$. We thus conclude that $(q - 1) |F| \le |\E_F| + |\E(F, G\setminus F)|$, contradicting the assumption.
\end{proof}

We remark that the proof of Proposition~\ref{prop:no-frozen-colorings2} shows something stronger, namely, that (under the assumption) any $q$-coloring of $G$ can be modified by swapping the two colors of a bi-color component (a so-called \emph{Kempe chain move}) contained in $F$.

We end this section with a short discussion about the existence of \emph{single-site frozen} $q$-colorings, those which are frozen on every $F$ having $|F|=1$.
Given a graph $G$ of maximum degree $\Delta$, it is clear that there do not exist single-site frozen $q$-colorings of $G$ whenever $q \ge \Delta + 2$. On the other hand, on $\Z^d$, it is straightforward to check that
$$x_{\mi} := \sum_{k=1}^d ki_k \Mod{2d+1}, \qquad \mi \in \Z^d ,$$
defines a single-site frozen $(2d+1)$-coloring of $\Z^d$. Similar constructions (like in the proof of Proposition \ref{prop:frozen-colorings}) yield single-site frozen $q$-colorings of $\Z^d$ for any $2 \le q \le 2d$. Thus, single-site frozen $q$-colorings of $\Z^d$ exist if and only if $2 \le q \le 2d+1$.

\section{List-colorability}
\label{section3}


In this section, we prove Theorem~\ref{thm:list-coloring}.
We will use the main result from~\cite{alon1992colorings}.
We say that a digraph $D$ is $L$-list-colorable if the underlying undirected graph is such. Define $L_D(v) := d^+_D(v)+1$, where $d^+_D(v)$ is the out-degree of $v$ in $D$.
The following is~\cite[Theorem~1.1]{alon1992colorings} specialized to digraphs having no odd directed cycles (this special case also follows from Richardson's Theorem; see~\cite[Remark~2.4]{alon1992colorings}).

\begin{thm}
\label{thm:digraph-list-coloring}
A finite digraph $D$ having no odd directed cycles is $L_D$-list-colorable.
\end{thm}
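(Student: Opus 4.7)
The plan is to deduce Theorem \ref{thm:digraph-list-coloring} from Richardson's theorem via the kernel method (due to Bondy--Boppana--Siegel, later famously used by Galvin for Dinitz's conjecture). Richardson's theorem asserts that every finite digraph with no odd directed cycle has a \emph{kernel}: an independent vertex set $K$ such that every vertex outside $K$ has an out-neighbor in $K$. I will take this as a black box.

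The first observation is hereditary: if $D$ has no odd directed cycle, then neither does any induced sub-digraph $D[U]$, since a directed cycle in $D[U]$ is already a directed cycle in $D$. Combined with Richardson, this yields that \emph{every} induced sub-digraph of $D$ has a kernel. This is exactly the input the kernel method requires.

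I would then induct on $|V(D)|$. Given lists $\{S_v\}$ with $|S_v| \geq d^+_D(v) + 1$, pick any color $c$ that appears in some list, let $V_c := \{v : c \in S_v\}$, and choose a kernel $K$ of the induced sub-digraph $D[V_c]$ (which exists by the previous paragraph). Color every vertex of $K$ with $c$; this is a proper partial coloring since $K$ is independent in $D[V_c]$, hence also in the underlying undirected graph. Next, delete $K$ and strike $c$ from every remaining list, forming a new digraph $D' := D \setminus K$ with new lists $S'_v$. If the invariant $|S'_v| \geq d^+_{D'}(v) + 1$ is preserved, the inductive hypothesis finishes the argument on $D'$ and, combined with the color $c$ on $K$, yields the desired proper list coloring of $D$.

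Verifying the invariant is the only real step. If $c \notin S_v$, then $S'_v = S_v$ and $d^+_{D'}(v) \leq d^+_D(v)$, so the bound is immediate. If $c \in S_v$ and $v \notin K$, then $v \in V_c \setminus K$, and the kernel property (applied \emph{inside $D[V_c]$}) produces an out-neighbor of $v$ in $K$; deleting $K$ therefore reduces $v$'s out-degree by at least one, exactly compensating for the removal of $c$ from its list. The main obstacle I anticipate is purely notational: keeping straight that the kernel property must be invoked in $D[V_c]$ rather than in $D$, so that it applies precisely to the vertices whose lists actually shrink. Once this bookkeeping is in place, the odd-cycle hypothesis is used only once per recursive step, via Richardson.
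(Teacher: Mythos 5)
Your proof is correct and complete: it is precisely the kernel-method argument via Richardson's theorem that the paper's parenthetical remark points to, and the induction invariant $|S'_v|\ge d^+_{D'}(v)+1$ is verified correctly (the key point that the kernel must be taken in $D[V_c]$ is handled properly). The paper itself gives no proof, only a citation to \cite{alon1992colorings}, so your write-up supplies the argument along the same intended route.
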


Thus, Theorem~\ref{thm:digraph-list-coloring} allows to prove $L$-list-colorability of an undirected graph $G$ by exhibiting an orientation of the edges of $G$ so that the out-degree of any vertex $v$ is strictly less than $L(v)$. The following provides a sufficient (and in fact necessary) condition for such an orientation to exist (see the closely related~\cite[Lemma~3.1]{alon1992colorings}).

\begin{cor}\label{cor:list-coloring}
Let $G$ be a finite bipartite graph and let $L: G\to \N$ satisfy
\[ \sum_{v\in H} (L(v)-1) \geq  |\E_H| \qquad\text{for any induced subgraph } H\subset G .\]

Then $G$ is $L$-list-colorable.
\end{cor}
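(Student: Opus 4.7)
The plan is to reduce Corollary~\ref{cor:list-coloring} to Theorem~\ref{thm:digraph-list-coloring} by orienting the edges of $G$ so that every vertex has out-degree at most $L(v) - 1$. Since $G$ is bipartite, any directed cycle in such an orientation $D$ is also a cycle in the underlying undirected graph and hence of even length; so Theorem~\ref{thm:digraph-list-coloring} applies and yields $L_D$-list-colorability, where $L_D(v) = d^+_D(v) + 1 \leq L(v)$. This immediately implies $L$-list-colorability.

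To build the orientation, I would set $f(v) := L(v) - 1$ and form an auxiliary bipartite graph $B$ whose two vertex classes are $\E_G$ and a collection consisting of $f(v)$ distinct copies of each $v \in G$; in $B$, join $e = \{u,v\} \in \E_G$ to every copy of $u$ and every copy of $v$. A matching in $B$ saturating $\E_G$ corresponds precisely to an assignment of each $e \in \E_G$ to one of its endpoints such that no vertex $v$ receives more than $f(v)$ edges. Orienting each edge away from the endpoint to which it is assigned then produces an orientation with $d^+(v) \leq f(v) = L(v) - 1$, as required.

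The key step is checking Hall's condition for $B$. Given $S \subseteq \E_G$, let $U$ be the set of vertices incident to some edge of $S$; then the neighborhood $N_B(S)$ consists of all copies of vertices in $U$, so $|N_B(S)| = \sum_{v \in U} f(v) = \sum_{v \in U}(L(v)-1)$. Since every edge of $S$ has both endpoints in $U$, we have $S \subseteq \E_{G[U]}$, and applying the hypothesis to the induced subgraph $H = G[U]$ gives
\[ |S| \;\leq\; |\E_{G[U]}| \;\leq\; \sum_{v \in U}(L(v) - 1) \;=\; |N_B(S)|. \]
Hall's theorem then yields the required matching.

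The only substantive point is recognizing that the hypothesis on induced subgraphs is exactly Hall's condition for this auxiliary matching problem; once that is observed, everything else is mechanical, with bipartiteness of $G$ playing its single role at the very end in guaranteeing the absence of odd directed cycles and thus the applicability of Theorem~\ref{thm:digraph-list-coloring}.
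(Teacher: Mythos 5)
Your proof is correct and follows essentially the same route as the paper: the same auxiliary bipartite graph with $L(v)-1$ copies of each vertex, the same verification of Hall's condition via the hypothesis applied to the subgraph induced by the endpoints of an edge set, the same resulting orientation with out-degrees bounded by $L(v)-1$, and the same appeal to bipartiteness to rule out odd directed cycles before invoking Theorem~\ref{thm:digraph-list-coloring}. No differences worth noting.
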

\begin{proof}
Denote $G=(V,E)$ and consider the set $\mathcal{V} := \{ (v,i) : v \in V, 1 \le i \le L(v)-1 \}$ containing $L(v)-1$ copies of any vertex $v$.
Let $\mathcal{G}$ be the bipartite graph with bipartition classes $E$ and $\mathcal{V}$ in which $\{e,(v,i)\}$ is an edge in $\mathcal{G}$ if and only if $v$ is incident to $e$. For any $F \subset E$, letting $H$ be the subgraph of $G$ induced by the endpoints of $F$, we see that the number of neighbors of $F$ is $\sum_{v \in H} (L(v)-1) \ge |\E_H| \ge |F|$.
Thus, by Hall's theorem, $\mathcal{G}$ contains a matching of size $|E|$. Given such a matching, we obtain a digraph $D$ on $V$ by orienting each edge $e \in E$ from the vertex it is matched to outwards. In this orientation of $G$, the out-degree of any vertex $v$ is at most $L(v)-1$ so that $L_D(v) \le L(v)$. Since $G$ is bipartite, $D$ has no odd directed cycles. It follows from Theorem~\ref{thm:digraph-list-coloring} that $D$ (and hence also $G$) is $L$-list-colorable.
\end{proof}
\begin{proof}[Proof of Theorem~\ref{thm:list-coloring}]
Let $\{K_t\}_{t=0}^d$ be the level sets of $L_n^d$, that is, 
\begin{align*}
 K_{t}
  :&= \big\{\mi \in [n]^d~:~ L_n^d(\mi)=t+2\big\} \\
   &= \big\{ \mi \in [n]^d ~:~ |\{1\leq j \leq d~:~ 1<|i_j|<n\}|=t\big\}.
\end{align*}

Let $H$ be an induced subgraph of $G$. Denote $H_t:=H\cap K_t$ for $0\leq t\leq d$ and $H_{-1}:=\emptyset$. By Corollary~\ref{cor:list-coloring}, it suffices to show that
\begin{equation*}
\sum_{t=0}^d (t+1) |H_t|\geq \sum_{t=0}^d (|\E_{H_t}| + |\E(H_t, H_{t-1})|).
\end{equation*}
We will in fact prove this inequality term-by-term, namely, that for any $0 \leq t\leq d$,
\begin{equation}\label{eq:what_we_prove}
 (t+1) |H_t|\geq |\E_{H_t}| + |\E(H_t, H_{t-1})|.
\end{equation}
Since this is trivial for $t=0$ (the right-hand side is zero), we fix $1 \le t \le d$ and aim to show that~\eqref{eq:what_we_prove} holds for this $t$.


Write $\degree_v(G)$ for the degree of a vertex $v$ in a graph~$G$. Note that for all $\mi\in K_t$, we have that $\degree_\mi(K_t\cup K_{t-1})=2t$. Thus,
\begin{eqnarray*}
2t|H_t|=\sum_{\mi\in H_t} \degree_\mi(K_t\cup K_{t-1}).
\end{eqnarray*}

The right-hand side counts the sum of the number of oriented edges $(u,v)$ such that $u \in H_t$ and $v \in K_t\cup K_{t-1}$. We thus see that
\begin{eqnarray*}
2t|H_t|=2|\E_{H_t}|+|\E(H_t, K_t\setminus H_{t})|+ |\E(H_{t}, H_{t-1})|+ |\E(H_t, K_{t-1}\setminus H_{t-1})|. 
\end{eqnarray*}

Hence, to obtain~\eqref{eq:what_we_prove}, it suffices to show that
\begin{equation*}
2|H_t|+|\E(H_t, K_t\setminus H_{t})|+ |\E(H_t, K_{t-1}\setminus H_{t-1})|\geq |\E(H_t, H_{t-1})|.
\end{equation*}

Since putting $H_{t-1}=K_{t-1}$ only increases the right-hand side and decreases the left-hand side, it suffices to prove that
\begin{equation*}\label{equation:main_equation_got}
2|H_t|+|\E(H_t, K_t\setminus H_{t})|\geq |\E(H_t, K_{t-1})|.
\end{equation*}

To prove this, we partition $\E(H_t, K_{t-1})$ into two sets, $E_1$ and $E_2$, and show that $|E_1|\leq |\E(H_t, K_t\setminus H_{t})|$ and $|E_2|\leq 2|H_t|$.

Given $e \in \E(H_t, K_{t-1})$, letting $\mi \in H_t$ and unit vector $\m u$ be such that $e=\{\mi,\mi+ \m u\}$, we denote the line in the box $[n]^d$ in the direction $e$ by
\begin{align*}
 \lines(e)
  :=&~\{\mi-m \m u \in K_t~:~m \in \Z \} \\
  =&~\{\mi-m \m u~:~m=0,\dots,n-3\} .
\end{align*}

We now partition $\E(H_t, K_{t-1})$ into the two sets
\begin{align*}
 E_1 &:=\{e\in \E(H_t, K_{t-1})~:~\lines(e)\not\subset H_t \},\\
 E_2 &:=\{e\in \E(H_t, K_{t-1})~:~\lines(e)\subset H_t \}.
\end{align*} 

We begin by showing that $|E_1|\leq |\E(H_t, K_t\setminus H_{t})|$. To this end, it suffices to construct an injective map $f: E_1\to \E(H_t, K_t\setminus H_{t})$. For $e \in E_1$, define
\[ f(e):= \{ \mi- (m-1)\m u, \mi-m \m u \} ,\]
where $\mi \in H_t$ and unit vector $\m u$ are such that $e=\{\mi,\mi+\m u\}$, and where $m$ is the smallest positive integer such that $\mi-m \m u \in K_t\setminus H_t$. It is straightforward to check that $f$ is injective.

We now show that $|E_2|\leq 2|H_t|$. Denote the set of lines contained in $H_t$ by
\[ \mathcal{L}_t:=\{\lines(e) ~:~ e \in E_2\} , \]
and note that $|E_2|=2|\mathcal{L}_t|$.
Observe also that every vertex belongs to at most $d$ lines and that each line in $\mathcal{L}_t$ consists of exactly $n-2$ vertices of $H_t$. Thus,
\[ |\mathcal{L}_t|\leq \frac{d |H_t|}{n-2} \le |H_t| . \qedhere \]
\end{proof}

Let us make a remark about two aspects of the tightness of Theorem~\ref{thm:list-coloring}.
The first concerns the assumption that $n \ge d+2$: We show in Section \ref{section5} that $[2]^2$ is $L_2^2$-list-colorable (here $2=n<d+2=4$), while $[2]^3$ is not $L_2^3$-list-colorable (here $2=n<d+2=5$). Fixing $d$, the question of the `optimal' value of $n$ such that $[n]^d$ is $L_n^d$-list-colorable remains; the use of the word optimal is justified in the next proposition. The second aspect concerns the sizes of the lists: $[n]^d$ may be $L$-list-colorable for functions $L$ which are pointwise smaller-or-equal than $L^d_n$. For example, $[n]^2$ is $\min\{L^2_n,3\}$-list-colorable; see Question~\ref{q:L-list-colorability}.

\begin{prop}
\label{prop:mono}
Let $n \ge 2$ and $d \ge 1$. Suppose $[n]^d$ is $L_n^d$-list-colorable. Then,
\begin{enumerate}
\item $[n]^{d-1}$ is $L_n^{d-1}$-list-colorable and
\item $[n+1]^d$ is $L_{n+1}^d$-list-colorable.
\end{enumerate}
\end{prop}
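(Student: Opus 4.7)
The plan for (1) is a direct embedding argument; the plan for (2) is to peel off a ``corner layer'' of $[n+1]^d$ and reduce to the hypothesis combined with iterated applications of (1).

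For (1), I identify $[n]^{d-1}$ with the face $F_0 := \{(j_1,\dots,j_{d-1},1) : \mj \in [n]^{d-1}\} \subset [n]^d$, which is induced and isomorphic to $[n]^{d-1}$. Since the coordinate $i_d=1$ does not contribute to $L_n^d$, the restriction of $L_n^d$ to $F_0$ is exactly $L_n^{d-1}$. Given lists $\{S_{\mj}\}_{\mj\in [n]^{d-1}}$ with $|S_{\mj}| \geq L_n^{d-1}(\mj)$, I transplant them to $F_0$ and extend by arbitrary lists of size $L_n^d(\cdot)$ on $[n]^d \setminus F_0$; the hypothesis then yields a proper list-coloring of $[n]^d$ whose restriction to $F_0$ is the desired coloring.

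For (2), I embed $[n]^d$ as the sub-box $\{1,\dots,n\}^d \subset [n+1]^d$ and stratify the ambient box by depth: set $R_j := \{v \in [n+1]^d : |\{k : v_k=n+1\}|=j\}$, so $R_0=[n]^d$, $R_d=\{(n{+}1,\dots,n{+}1)\}$, and $[n+1]^d=\bigsqcup_{j=0}^d R_j$. Each $R_j$ further decomposes, over $T \in \binom{[d]}{j}$, into slices $S_T := \{v : v_k=n+1 \text{ iff } k \in T\}$; the projection that forgets the coordinates in $T$ identifies $S_T$ with $[n]^{d-j}$. A key structural point is that no edge of $[n+1]^d$ joins two distinct slices of the same $R_j$, since any such move would require changing two coordinates at once; hence the slices at each depth are pairwise non-adjacent.

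Given lists $\{S_v\}$ with $|S_v| \geq L_{n+1}^d(v)$, I color the strata in decreasing order $j=d, d-1,\dots, 0$. At stage $j$, for $v \in S_T$ the only previously-colored neighbors are $\{v+\m e_\alpha : \alpha \notin T,\ v_\alpha=n\}$, so the effective list size at $v$ is $L_{n+1}^d(v)-|\{\alpha \notin T : v_\alpha=n\}|$; a short calculation from the definition of $L$ shows this equals $L_n^{d-j}(v|_{[d]\setminus T})$. Since the slices inside $R_j$ are disconnected from each other, I color each independently using $L_n^{d-j}$-list-colorability of $[n]^{d-j}$, which follows from the hypothesis by applying (1) a total of $j$ times; the final stratum $R_0=[n]^d$ is handled by the hypothesis directly. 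The main point requiring care is the effective-list-size identity: a coordinate $\alpha \notin T$ with $v_\alpha=n$ contributes $+1$ to both $L_{n+1}^d$ and the deficit (net zero), one with $v_\alpha \in \{2,\dots,n-1\}$ contributes $+1$ only to $L_{n+1}^d$, and one with $v_\alpha=1$ contributes nothing to either, so the bookkeeping matches $L_n^{d-j}$ exactly on each slice.
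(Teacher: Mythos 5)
Your proof is correct and follows essentially the same route as the paper's: part (1) by identifying $[n]^{d-1}$ with the face $[n]^{d-1}\times\{1\}$ (on which $L_n^d$ restricts to $L_n^{d-1}$), and part (2) by decomposing $[n+1]^d\setminus[n]^d$ into copies of $[n]^{d-j}$ indexed by the set of coordinates equal to $n+1$ and coloring them in increasing order of dimension before invoking the hypothesis on $[n]^d$. The only difference is that you make explicit the list-size bookkeeping (that the deficit from already-colored neighbors is exactly $|\{\alpha\notin T: v_\alpha=n\}|$, matching $L_{n+1}^d-L_n^{d-j}$), which the paper leaves to the reader.
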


\begin{proof}
Notice that the function $L_n^{d-1}$ is just the restriction $L_n^d|_{[n]^{d-1}\times \{1\}}$ after identifying $[n]^{d-1}$ with $[n]^{d-1}\times \{1\}$. This proves the first part. 

For the second part, we start by coloring $[n+1]^d\setminus [n]^d$ and then using the given hypothesis to color the portion which is left, that is, $[n]^d$. In order to color $[n+1]^d\setminus [n]^d$, we proceed by decomposing it into copies of $[n]^k$, where $0\leq k\leq d-1$, and successively coloring them in increasing order of $k$ (using the first part repeatedly).
\end{proof}

\section{Mixing properties}
\label{section4}

In this section, we study various aspects of rigidity and mixing of $X^d_q$, the set of all proper $q$-colorings of $\Z^d$ (the color set here is always taken to be $\{1,\dots,q\}$).
Let us discuss some consequences of the above theorems in terms of \emph{mixing properties}. The space $X^d_q$ is an instance of a so-called \emph{shift space}~\cite{MR2645044}, and the following properties, which we define only for $X^d_q$, are applicable in this more general context (for a general introduction to mixing properties, we would refer the reader to~\cite{MR2645044} and in the context of graph homomorphisms to~\cite{MR3702862,MR3743365}).

Given nonempty sets $U, V \subset \ZD$, we denote
$$\dist(U,V):=\min_{\mi \in U, \mj \in V}|\mi-\mj|_1.$$

Also, we denote by $B_n^d$ the set $\{-n,\dots,n\}^d$.

Four important mixing properties (in increasing order of strength) are:

\begin{enumerate}
\item $X^d_q$ is \emph{topologically mixing (TM)} if for all $U, V \subset \ZD$, there exists $n \in \N$ such that for all $\mi\in \ZD$ for which $d(U+\mi, V)\geq n$ and $x, y\in X^d_q$ there exists $z\in X^d_q$ such that $z|_{U+\mi}=x|_{U+\mi}$ and $z|_V = y|_V$.

\item $X^d_q$ is \emph{strongly irreducible (SI) with gap $n$} if for all $x, y\in X^d_q$ and $U, V \subset \ZD$ for which $\dist(U,V)\geq n$, there exists $z\in X^d_q$ such that $z|_U = x|_V$ and $z|_U = y|_V$.

\item $X^d_q$ has the \emph{finite extension property (FEP) with distance $n$} if for any $U \subseteq \ZD$ and any coloring $u$ of $U$, if $u$ can be extended to a $q$-coloring of $U + \bo_n^d$, then $u$ can be extended to a $q$-coloring of $\Z^d$.

\item $X^d_q$ is \emph{topologically strong spatial mixing (TSSM) with gap $n$} if for all $x, y\in X^d_q$ and $U, V, W \subset \ZD$ for which $\dist(U, V)\geq n$ and $x|_W = y|_W$, there exists $z\in X^d_q$ such that $z|_{U \cup W}=x|_{U \cup W}$ and $z|_{V \cup W}=y|_{V \cup W}$.
\end{enumerate}

The following implications hold:
\begin{equation}\label{eq:mixing-props}
\text{(TSSM)} \implies \text{(FEP)} \implies \text{(SI)} \implies \text{(TM)}.
\end{equation}
The first implication follows from \cite[Proposition 2.12]{2018factoring} and the additional observation that though the FEP property in \cite{2018factoring} is seemingly different from ours,  it is in fact equivalent (the gap might be different for the two definitions though). The other two implications are straightforward to verify.

A \emph{partial $q$-coloring of $U$} is a $q$-coloring of a subset $C$ of $U$, i.e., an assignment of colors in $\{1,\dots,q\}$ to each vertex in $C$ such that any pair of adjacent vertices in $C$ have different colors. We call $C$ the \emph{support} of the partial $q$-coloring.

We say that $X^d_q$ is \emph{$n$-fillable} if any partial $q$-coloring of $\partial [n]^d$ with support $C$ can be extended to a $q$-coloring of $[n]^d \cup C$. Notice that $X^d_q$ is $1$-fillable if and only if given any $q$-coloring of the neighbors of a vertex, we can always extend it to the vertex itself, and this is true if and only if $q \geq 2d+1$. In~\cite{2015integral}, this last property was called \emph{single-site fillability (SSF)} and used in the context of {shift spaces}.


\begin{prop}
\label{prop:list_coloring}
If $[n]^d$ is $L_n^d$-list-colorable, then $X^d_q$ is $n$-fillable for $q \geq d+2$. 
\end{prop}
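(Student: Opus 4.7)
The plan is to translate $n$-fillability directly into a list-coloring problem on $[n]^d$. Let $u$ be a partial proper $q$-coloring of $\partial [n]^d$ with support $C \subseteq \partial [n]^d$, and for each $\mi \in [n]^d$ define the list
\[ S_{\mi} := \{1,\dots,q\} \setminus \{u(v) : v \in C,\ v \sim \mi\} \]
of colors not ruled out by $u$ at $\mi$. If we can exhibit a proper coloring $f$ of $[n]^d$ with $f(\mi) \in S_\mi$ for every $\mi$, then defining $z = f$ on $[n]^d$ and $z = u$ on $C$ produces a proper $q$-coloring of $[n]^d \cup C$ extending $u$: edges inside $[n]^d$ are handled by properness of $f$, edges inside $C$ by $u$ being proper, and mixed edges $\{\mi,v\}$ with $\mi \in [n]^d$, $v \in C$ by the fact that $f(\mi) \in S_\mi$ excludes $u(v)$.

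The one quantitative step is verifying that $|S_\mi| \geq L_n^d(\mi)$ so that the hypothesized $L_n^d$-list-colorability of $[n]^d$ applies. Let $r(\mi) := |\{k : i_k \in \{1,n\}\}|$. For $n \geq 2$ the $2d$ lattice neighbors of $\mi$ are distinct, and the neighbors of $\mi$ lying in $\partial[n]^d$ are exactly $\mi - \m e_k$ for each $k$ with $i_k=1$ and $\mi + \m e_k$ for each $k$ with $i_k=n$, giving $r(\mi)$ such neighbors. At most $r(\mi)$ colors are removed from $\{1,\dots,q\}$, so $|S_\mi| \geq q - r(\mi)$. Since $r(\mi) + |\{k : 1 < i_k < n\}| = d$, the hypothesis $q \geq d+2$ yields
\[ |S_\mi| \geq q - d + |\{k : 1 < i_k < n\}| \geq 2 + |\{k : 1 < i_k < n\}| = L_n^d(\mi). \]
Applying $L_n^d$-list-colorability of $[n]^d$ to the lists $\{S_\mi\}$ produces the coloring $f$ we need, and the construction above finishes the proof.

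There is no real obstacle here: the argument is essentially bookkeeping, with the only point requiring care being the identification of neighbors of $\mi$ that lie in $\partial[n]^d$, from which $L_n^d(\mi)$ emerges naturally as the correct list-size threshold. The proposition is really just saying that $L_n^d$ was defined precisely so as to encode this boundary-extension problem as a list-coloring problem.
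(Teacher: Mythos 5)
Your proposal is correct and is essentially the paper's own proof: translate the boundary condition into lists $S_\mi$ obtained by deleting the colors of occupied boundary neighbors, check $|S_\mi|\ge L_n^d(\mi)$, and invoke $L_n^d$-list-colorability. The only difference is that you spell out the counting step (that $\mi$ has exactly $|\{k: i_k\in\{1,n\}\}|$ neighbors in $\partial[n]^d$ when $n\ge 2$), which the paper dismisses as "clear."
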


\begin{proof} 
Fix $q\geq d+2$ and suppose that $[n]^d$ is $L_n^d$-list-colorable. Let $c$ be a partial $q$-coloring of $\partial [n]^d$ with support $C$. Now consider the lists
$S:\bo_n^d\to 2^{\{1, 2, \ldots, q\}}$ given by 
$$S(\mi):=\{1,2 \ldots, q\}\setminus \{c_{\mj}~:~\mj\in C\cap \partial \{\mi\}\}.$$

Then, clearly $|S(\mi)|\geq L_n^d(\mi)$. Since $[n]^d$ is $L_n^d$-list-colorable, we have that $c$ extends to a proper coloring of $[n]^d \cup C$.
\end{proof}

 For $d=2$ and $q \geq 4$, $n$-fillability already followed from \cite[Section 4.4]{MR1359979} and the analogous property for a $2 \times 2$ box is also true (see \cite{MR3819997}). 

For $q\leq d+1$ and any $n$, we can also construct an explicit partial $q$-coloring of $\partial [n]^d$ which cannot be extended to a $q$-coloring of $[n]^d$. Observe that the frozen $q$-coloring $x$ defined in the proof of Proposition \ref{prop:frozen-colorings} has an additional property: Restricting $x$ to the elements of $\partial [n]^d$ with at least one coordinate zero extends to a $q$-coloring of $[n]^d$ in a unique manner. Now we let 
$$C:=\{(i_1, \ldots, i_d)\in\partial [n]^d~:~i_k=0\text{ for some }k \}\cup \{(n+1,1, \ldots, 1)\}$$
and define $c$ to be the $q$-coloring of $C$ which coincides with $x$ everywhere except at $(n+1,1,\ldots, 1)$, where it is equal to $x_{(n,1,\ldots,1)}$; it is clear that $c$ cannot be extended to a coloring of $[n]^d$.
 
 Now we consider the following generalization to more general shapes.

\begin{prop}
\label{prop:fillable}
If $X^d_q$ is $n$-fillable, then for any subset $U \subseteq \Z^d$ that can be written as a union of translations of $[n]^d$ and any partial $q$-coloring $c$ of $\partial U$, there exists an extension of $c$ to a $q$-coloring of $U \cup C$, where $C$ is the support of $c$.
\end{prop}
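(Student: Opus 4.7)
The plan is to enumerate the translates of $[n]^d$ in a decomposition $U = \bigcup_{k \geq 1} B_k$ and apply $n$-fillability box by box, allowing each new application to overwrite the colors previously assigned to vertices in the interior overlap of the current box with earlier ones. The pointwise limit of the resulting sequence of partial colorings will be well defined because each vertex of $\Z^d$ lies in only finitely many translates of $[n]^d$.

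First, I would set $c_0 := c$ and inductively construct a sequence of proper partial $q$-colorings $c_0, c_1, c_2, \ldots$. Given $c_k$, restrict it to $\partial B_{k+1}$ to obtain a proper partial coloring of $\partial B_{k+1}$ with support $\partial B_{k+1} \cap \mathrm{supp}(c_k)$, and apply $n$-fillability to the translate $B_{k+1}$ to extend it to a proper coloring $d_{k+1}$ of $B_{k+1} \cup (\partial B_{k+1} \cap \mathrm{supp}(c_k))$ that coincides with $c_k$ on its boundary support. Then define $c_{k+1}$ to agree with $d_{k+1}$ on $B_{k+1}$ and with $c_k$ on $\mathrm{supp}(c_k) \setminus B_{k+1}$. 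In general this will overwrite $c_k$ on the interior overlap $B_{k+1} \cap \mathrm{supp}(c_k)$, which is precisely where a naive ``extend and never modify'' induction would get stuck.

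To verify that each $c_{k+1}$ is proper, the only nontrivial case is an edge $\{u,v\}$ with $u \in \mathrm{supp}(c_k) \setminus B_{k+1}$ and $v \in B_{k+1}$; any such $u$ must lie in $\partial B_{k+1} \cap \mathrm{supp}(c_k)$, so $d_{k+1}(u) = c_k(u) = c_{k+1}(u)$, and properness of $d_{k+1}$ gives $c_{k+1}(u) \neq c_{k+1}(v)$. The main step that I expect to carry the proof is the following finite-multiplicity observation: any vertex $w \in \Z^d$ belongs to at most $n^d$ translates of $[n]^d$ (the admissible base points being confined to $w - [n]^d$), hence to at most $n^d$ of the boxes $B_k$. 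Consequently the sequence $c_k(w)$ changes at most $n^d$ times and must stabilize. Moreover, since $C \subseteq \partial U$ is disjoint from every $B_k \subseteq U$, the values of $c$ on $C$ are never touched.

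Finally, I would set $c_\infty(w) := \lim_{k \to \infty} c_k(w)$, which is well defined on $\bigcup_k \mathrm{supp}(c_k) = U \cup C$ and agrees with $c$ on $C$. For any edge $\{u,v\} \subseteq U \cup C$, pick $k^*$ past the last step modifying either endpoint; then $c_\infty(u) = c_{k^*}(u) \neq c_{k^*}(v) = c_\infty(v)$ by properness of $c_{k^*}$. Hence $c_\infty$ is the required proper $q$-coloring of $U \cup C$ extending $c$. The main obstacle, as already noted, is that overlapping boxes may force previously colored interior vertices to be reassigned at subsequent steps; allowing these controlled overwrites, together with the fact that each vertex is affected only finitely many times, is what makes the construction converge.
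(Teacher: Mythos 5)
Your proof is correct and follows essentially the same route as the paper's: enumerate the translates of $[n]^d$, apply $n$-fillability to one box at a time using the currently available boundary data, and accept that overlaps with earlier boxes get recolored. The only cosmetic difference is at the end, where the paper invokes compactness of $\{1,\dots,q\}^U$ for infinite unions, whereas you obtain the limit directly from the observation that each vertex lies in at most $n^d$ translates, so the sequence of partial colorings stabilizes pointwise.
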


\begin{proof}
Suppose that $X^d_q$ is $n$-fillable for some $n \in \N$. Consider any set $F \subset \Z^d$ such that $U = \bigcup_{\vec{j} \in F} (\vec{j} + [n]^d)$. Without loss of generality, suppose that $F$ is minimal, i.e., for every proper subset $F'$, $\bigcup_{\vec{j} \in F'} (\vec{j} + [n]^d)$ is a proper subset of $U$.

Now, given an arbitrary $\vec{i} \in F$, proceed by using the $n$-fillability property to find a coloring $a$ of $\vec{i} + [n]^d$ and considering its boundary to be colored according to $c$ restricted to $C \cap \partial(\vec{i} + [n]^d)$. Next, iterate this process with $U'$, $C'$, and $c'$, where $U'$ is $\bigcup_{\vec{j} \in F \setminus \{\vec{i}\}} (\vec{j} + [n]^d)$, $C'$ is $\partial U'$, and $c'$ is the restriction to $C'$ of the concatenation of the partial colorings $a$ and $c$. If $F$ is finite, this process finishes in at most $|F|$ steps, and if $F$ is infinite, we can conclude by the compactness of $\{1,\dots,q\}^{U}$.
\end{proof}

\begin{prop}
\label{prop:fillable_to_FEP}
If $X^d_q$ is $(2n+1)$-fillable, then it satisfies FEP with distance $2n$.
\end{prop}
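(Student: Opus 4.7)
The plan is to deduce the Finite Extension Property from Proposition~\ref{prop:fillable} by exhibiting a set $U' \subseteq \Z^d$ which (i) is a union of translates of $[2n+1]^d$, (ii) contains the still-uncolored region, and (iii) has its external boundary inside the already-colored region. Let $u$ be a $q$-coloring of $U$ admitting a proper extension $\tilde u$ to $W := U + B_{2n}^d$, and write $Y := \Z^d \setminus W$ for the region still to be colored.

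I would take $U'$ to be the $n$-thickening of $Y$ in the $\ell_\infty$-sense,
\[
 U' := \bigcup_{\vec i \in Y}\bigl(\vec i + B_n^d\bigr).
\]
Properties (i) and (ii) are immediate, since each $\vec i + B_n^d$ is a translate of $[2n+1]^d$ and contains $\vec i$. For (iii), the argument is short: if $\vec k \notin U'$, then no point of $Y$ lies in $\vec k + B_n^d$, so $\vec k + B_n^d$ is disjoint from $Y$ and therefore contained in $W$; in particular $\vec k \in W$, whence $\partial U' \subseteq \Z^d \setminus U' \subseteq W$. Applying Proposition~\ref{prop:fillable} (valid because $X^d_q$ is $(2n+1)$-fillable) with the valid partial coloring $c := \tilde u|_{\partial U'}$ then yields a proper extension $\hat u$ of $c$ to $U' \cup \partial U'$.

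To glue, I would define $w : \Z^d \to \{1,\dots,q\}$ by $w := \hat u$ on $U'$ and $w := \tilde u$ on $\Z^d \setminus U'$; the two definitions agree on the overlap $\partial U'$ because $\hat u$ extends $c = \tilde u|_{\partial U'}$. Every edge of $\Z^d$ then has either both endpoints in $U' \cup \partial U'$ or both in $\Z^d \setminus U'$, since any vertex of $\Z^d \setminus U'$ adjacent to $U'$ lies in $\partial U'$ by definition; hence $w$ is proper. Finally, every point of $Y$ is at $\ell_\infty$-distance greater than $2n$ from $U$, so $U \cap U' = \emptyset$, giving $w|_U = \tilde u|_U = u$, as required.

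The principal step requiring attention is (iii). The fattening radius $n$ is the natural match to the two parameters in the statement: it makes each ball a translate of $[2n+1]^d$, so that $(2n+1)$-fillability can be invoked, while leaving an $n$-unit cushion around $Y$ that accommodates the external boundary of $U'$ within the $B_{2n}^d$-neighborhood of $U$ where $\tilde u$ is already defined.
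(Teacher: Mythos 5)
Your proof is correct and follows essentially the same route as the paper: both reduce FEP to Proposition~\ref{prop:fillable} by covering the as-yet-uncolored region with translates of $[2n+1]^d$ whose external boundary lies inside the $B_{2n}^d$-neighborhood of $U$ on which the given extension is defined. The only difference is cosmetic: the paper obtains this covering from a fixed partition of $\Z^d$ into disjoint translates of $B_n^d$ (keeping the coloring on the cells that meet $U$ and filling the rest), whereas you take the $B_n^d$-thickening of the uncolored set, which makes the boundary verification slightly more direct.
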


\begin{proof}In this proof, we suppress the $d$ in the notation of $\bo_n^d$. Consider $U \subseteq \ZD$ and a coloring $u$ of $U$ that can be extended to a proper $q$-coloring $\overline{u}$ of $U + \bo_{2n}$. 

Now, consider a family of vectors $\{\vec{k}_\ell\}_{\ell \in \N}$ in $\Z^d$ such that $\{\vec{k}_\ell + B_n\}_{\ell \in \N}$ is a partition of $\Z^d$. For example, $\{n\vec{k} + B_n\}_{\vec{k} \in \Z^d}$ would be enough. Let $S \subseteq \N$ be the set of indices $\ell$ such that $(\vec{k}_\ell + B_n) \cap U \neq \emptyset$.

Notice that $U \subseteq \bigcup_{\ell \in S} (\vec{k}_\ell + \bo_n) \subseteq U + B_{2n}$. Indeed, if $\vec{i} \in U$, then, since $\{\vec{k}_\ell + B_n\}_{\ell \in \N}$ is a partition of $\Z^d$, there exists (a unique) $\ell^*\in S$ such that $\vec{i} \in (\vec{k}_{\ell^*} + B_n)$. On the other hand, if $\vec{i} \in (\vec{k}_{\ell^*} + B_n)$ for some $\ell^* \in S$, then there exists $\vec{j} \in (\vec{k}_{\ell^*} + B_n) \cap U$, so $\|\vec{i} - \vec{j}\|_\infty \leq 2n$. Therefore, $\vec{i} \in (\vec{j} + B_{2n}) \subseteq U+ B_{2n}$.

Finally, consider $u'$ to be the restriction of $\overline{u}$ to $\bigcup_{\ell \in S} (\vec{k}_\ell + B_n)$. Then, $u'$ is a proper partial coloring and the complement of its support $\{\vec{k}_\ell + B_n\}_{\ell \in \N \setminus S}$ is a (disjoint) union of boxes $B_n$ with perhaps partially defined colorings on its boundary (the restriction of $u'$ to $\partial (\vec{k}_\ell + B_n)$). Then, observing that $B_n$ is just a translation of $[2n+1]^d$, by Proposition \ref{prop:fillable}, we conclude.
\end{proof}

From what has been observed in \cite[Proposition 2.12]{2018factoring}, it follows for $X^d_q$ that FEP with distance $0$ is equivalent to TSSM. Furthermore, it is easy to see that FEP with distance $n$ implies SI with gap $2n$. Considering all this, we have the following result.

\begin{thm}
\label{thm:mixing}
Let $d \ge 1$. Then,
\begin{enumerate}
\item 
For $3 \leq q \leq d+1$,
$X^d_q$ is TM, but not SI.
\item For $d+2 \leq q \leq 2d$, $X^d_q$ is FEP, but not TSSM.
 \item For $2d+1 \leq q$, $X^d_q$ is TSSM.
\end{enumerate}
\end{thm}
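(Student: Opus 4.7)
The proof splits naturally into the three regimes, and I would treat each by invoking the propositions already established.

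\textbf{Case (3), $q \ge 2d+1$.} The $2d$ neighbors of any vertex forbid at most $2d$ colors, so one of the $q > 2d$ colors is always available, i.e.\ $X^d_q$ is $1$-fillable. Proposition~\ref{prop:fillable_to_FEP} with $n=0$ gives FEP with distance $0$, which is TSSM by the cited equivalence from \cite[Proposition~2.12]{2018factoring}.

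\textbf{Case (2), $d+2 \le q \le 2d$.} Theorem~\ref{thm:list-coloring} gives $L_{d+2}^d$-list-colorability of $[d+2]^d$; if $d+2$ is even, one extra application of Proposition~\ref{prop:mono} yields $L_n^d$-list-colorability for some odd $n \ge d+2$. Proposition~\ref{prop:list_coloring} upgrades this to $n$-fillability of $X^d_q$, and Proposition~\ref{prop:fillable_to_FEP} converts this into FEP. To show that TSSM fails (equivalently, that FEP with distance $0$ fails): since $q \le 2d$ and $\partial\{v\}$ is an independent set in $\ZD$, one can color it so that all $q$ colors appear; this proper coloring of the finite set $\partial\{v\}$ cannot be extended to $v$, hence not to $\ZD$.

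\textbf{Case (1), $3 \le q \le d+1$.} For the failure of SI: by Theorem~\ref{thm:frozen-colorings}, fix a frozen $q$-coloring $x$, pick $v \in \ZD$ and any $y \in X^d_q$ with $y_v \ne x_v$. With $U = \{v\}$ and $V = \ZD \setminus (v + B_n^d)$ one has $\dist(U, V) \ge n+1$; any $z \in X^d_q$ with $z|_U = y|_U$ and $z|_V = x|_V$ agrees with $x$ off the finite set $v + B_n^d$, so by frozenness $z = x$, contradicting $z_v = y_v$. As $n$ is arbitrary, SI fails at every gap. For TM, given finite $U, V$ and $x, y \in X^d_q$, the plan is to realize both local patterns simultaneously by extending each into a large surrounding box and filling a thick slab-shaped buffer between them; for $q \ge 3$, the buffer can be properly colored to match arbitrary two-sided boundary conditions via a direct layer-by-layer interpolation (or, for $d=2$, $q=3$, via the height-function bijection), which for $\dist(U+\vec{i}, V)$ large enough produces the required global coloring $z$.

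\textbf{Main obstacle.} Establishing TM in Case~(1) is the crux, since the local-extension arguments of Cases~(2)--(3) break down in the presence of frozen configurations. The key point is that although individual configurations may be rigid, the shift space as a whole is, for $q \ge 3$, rich enough to realize any pair of widely-separated, locally-extendable patterns; constructing the interpolating coloring in the buffer, using only the flexibility of $q \ge 3$ colors and not any global extension property, is the main technical step.
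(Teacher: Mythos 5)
Your Case (3), the FEP half of Case (2), and the failure of SI in Case (1) essentially coincide with the paper's arguments. (Your non-SI argument, taking $V$ to be the complement of a box rather than its boundary, is if anything cleaner than the paper's, and your attention to the parity requirement of Proposition~\ref{prop:fillable_to_FEP} is a point the paper glosses over.) There are, however, two genuine gaps.

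First, the failure of TSSM in Case (2). You deduce it from the failure of FEP with distance $0$, which requires the implication ``TSSM $\Rightarrow$ FEP with distance $0$''. That implication is not a general fact about nearest-neighbour shifts of finite type (e.g.\ adjoin to a full shift a symbol forbidden next to everything: the single-site pattern is locally but not globally admissible, yet the subshift is TSSM), and for $X^d_q$ it amounts to exactly the dichotomy being proved, so invoking it here is circular. What \cite{2018factoring} supplies is the direction FEP $\Rightarrow$ TSSM, which is the one you correctly use in Case (3). The paper refutes TSSM directly: for each $n$ it exhibits $x,y\in X^d_q$ agreeing on $W=\partial\{m\vec{e}_1 : m\in\Z\}$, arranged so that the $2d-2$ transverse neighbours of each axis site $m\vec{e}_1$ already carry all of the colors $0,\dots,q-3$ (possible precisely because $q-2\le 2d-2$). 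Any $z$ agreeing with $x$ on $W$ must then take values in $\{q-2,q-1\}$ along the axis, hence alternate, so fixing $z$ at the origin ($U=\{\vec{0}\}$) determines $z$ at $n\vec{e}_1$ ($V=\{n\vec{e}_1\}$) and forces $z|_V\neq y|_V$. Note that your observation about $\partial\{v\}$ cannot be turned into such a witness: the $2d$ neighbours of $v$ are pairwise within distance $2$, so they cannot be split into sets $U$ and $V$ that are far apart.

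Second, topological mixing in Case (1) is not proved. The ``layer-by-layer interpolation'' across a buffer is precisely the step that is delicate when $q\le d+1$: this is the regime in which frozen colorings exist, and for $q=3$ interpolating between two arbitrary boundary conditions across a slab is a nontrivial theorem rather than a greedy argument. The paper does not prove this either; it cites \cite{MR3743365} for the fact that $X^d_q$ is TM if and only if $q\ge 3$. You should either cite that result explicitly or supply an actual construction; as written, the ``main technical step'' you yourself identify is simply absent.
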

In the following, $\mi\in \Z^d$ is called \emph{even} or \emph{odd} if the sum of its coordinates is even or {odd}, respectively. Let $\m e_1, \m e_2, \ldots, \m e_{2d}$ be the unit vectors in $\Z^d$, where $\m e_i=-\m e_{2d-i+1}$ for $1\leq i \leq d$.
\begin{proof}
We split the proof into the three cases in the statement.
\begin{itemize}[leftmargin=20pt,itemsep=0.5em]
\item {\bf Case 1: $3 \leq q \leq d+1$.} In \cite{MR3743365}, it is shown that, for all $d$, $X^d_q$ is topologically mixing if and only if $q \geq 3$. However, $X^d_q$ is not SI if $q \leq d+1$. By Theorem~\ref{thm:frozen-colorings}, we have in fact that there exist frozen $q$-colorings, which is stronger, i.e., if $X^d_q$ has a frozen $q$-coloring, then it cannot be SI: Let $x$ be a frozen $q$-coloring and $y$ any $q$-coloring which differs from $x$ at $\m 0$.
Let $n \ge 1$, $U=\partial B_{n}$ and $V=\{\m 0\}$.
Observe that there does not exists a $q$-coloring $z$ which agrees with $x$ on $U$ and $y$ on $V$, while $\dist(U,V)=n$.

\item {\bf Case 2: $d+2 \leq q \leq 2d$}. By Proposition \ref{prop:list_coloring}, $X^d_q$ is $n$-fillable for $q \geq d+2$. Therefore, by Proposition \ref{prop:fillable_to_FEP}, $X^d_q$ satisfies FEP with distance $n$. 

Fix $n\in \N$ and let $S:=\partial \{m \m e_1: m\in \Z\}$, $U:=\{(0,\ldots, 0)\}$, and $V:=\{n\m e_1\}$. Let $x,y \in X^d_q$ be given by 
$$x_\mi:=\begin{cases}
m+ t\mod (q-2)&\text{ if }\mi=m\m e_1+\m e_t\text{ for }2\leq t \leq 2d-1\text{ and } m\in \Z\\
q-2&\text{ if }\mi\notin \partial \{m \m e_1~:~ m\in \Z\}\text{ is odd}\\
q-1&\text{ if }\mi\notin \partial \{m \m e_1~:~ m\in \Z\}\text{ is even}
\end{cases}$$
and
$$y_\mi:=\begin{cases}
m+ t\mod (q-2)&\text{ if }\mi=m\m e_1+ \m e_t\text{ for }2\leq t \leq 2d-1\text{ and } m\in \Z\\
q-1&\text{ if }\mi\notin \partial \{m \m e_1~:~ m\in \Z\}\text{ is odd}\\
q-2&\text{ if }\mi\notin \partial \{m \m e_1~:~ m\in \Z\}\text{ is even}.
\end{cases}$$

Clearly $x|_S= y|_S$. Suppose that $z\in X^d_q$ is such that $z|_{U\cup S}:=x|_{U\cup S}$. We have that 
\begin{eqnarray*}
z_{(0, \ldots, 0)}=&x_{(0, \ldots, 0)}&= q-1\\
z_{m\m e_1+ \m e_t}=&x_{m\m e_1+ \m e_t}&=m+ t\mod(q-2)
\end{eqnarray*}
for $2\leq t \leq 2d-1$ and $m\in \Z$. Since $q\leq 2d$, it follows that for all $m\in \Z$,
$$\{z_{m\m e_1+ \m e_t}~:~ 2\leq t \leq 2d-1\}=\{0, 1, \ldots, q-3\}$$
and hence $z_{m \m e_1}=x_{m \m e_1} $ for all $m\in \Z$ and $z|_V\neq y|_V$. Since $n$ was arbitrary, we have that $X^d_q$ is not TSSM.

\item {\bf Case 3: $2d+1 \leq q$.} In this case, $X^d_q$ is $0$-fillable and thus satisfies TSSM. \qedhere
\end{itemize}
\end{proof}

\section{Discussion}
\label{section5}
 
\subsection{Gibbs measures and the influence of boundaries} 

One of the key motivations of this paper was to study the influence of a $q$-coloring of the boundary of a box on the colorings inside. Given $n,d,q\in \N$ and $x \in X^d_q$, let
$$X_{x, n, d, q}:= \big\{y\in X^d_q~:~y|_{\Z^d\setminus [n]^d}=x|_{\Z^d\setminus [n]^d}\big\}.$$

If $X^d_q$ is SI, then there exists $n_0\in \N$ such that for all $x\in X^d_q$ and $n\ge n_0$,
\begin{eqnarray*}
1	&	\geq	&	\lim_{n\to \infty} \frac{\log |X_{x,n,d,q}|}{\log |\{\text{$q$-colorings of $[n]^d$}\}|}								\\
	&	\geq 	&	\lim_{n\to \infty} \frac{\log |\{\text{$q$-colorings of $[1+n_0,n-n_0]^d$}\}|}{\log |\{\text{$q$-colorings of $[n]^d$}\}|} = 1.
\end{eqnarray*}

It is not difficult to prove that the limit
$$\lim_{n\to \infty}\frac{1}{n^d} \log{|\{\text{$q$-colorings of $[n]^d$}\}|}$$
exists for all $d,q$ and is referred to as the \emph{entropy (of $X^d_q$)}, denoted by $h_{d,q}$. If the limit 
$$\lim_{n\to \infty}\frac{1}{n^d} {\log |X_{x,n,d,q}|}$$
exists, then it is denoted by $h_{x,d,q}$. 

By Theorem~\ref{thm:mixing}, \eqref{eq:mixing-props} and the calculation above, we have that for $q\geq d+2$, $h_{x,d,q}=h_{d,q}$ for all $x\in X^d_q$. If $q\leq d+1$, then there are frozen $q$-colorings $x\in X_{d,q}$ by Theorem~\ref{thm:frozen-colorings}. For such $x$, $h_{x,d,q}=0$.

\begin{question} Given $q \le d+1$, what is the set of possible values $h_{x,d,q}$ for $x\in X^d_q$? Is it the entire interval $[0,h_{d,q}]$?
\end{question}

This has been established for $q=3$ in \cite{MR2251117} using the ``height function'' formalism, which is missing for other values of $q$.

For $q\geq d+2$, one of the main questions that we would like to address is the following. Let $\mu_{x, n,d, q}$ denote the uniform measure on $X_{x,n,d,q}$.

\begin{question} For what values of $q$ and $d$ does $X^d_q$ have a unique Gibbs measure? In other words, do the measures $\mu_{x,n,d,q}$ converge weakly as $n$ goes to infinity to the same limit for all $x\in X^d_q$?
\end{question}

This has been proved in the case when $q\geq 3.6 d$ \cite{katzmisragamarnik2015} and we suspect that there exists a sequence $q_d$ satisfying $\lim_{d\to \infty} \frac{q_d}{d}=1$ such that it is true when $q\geq q_d$. Note that the existence of frozen colorings preclude the possibility of a unique Gibbs measure, so that Theorem~\ref{thm:frozen-colorings} implies that this does not hold when $q \le d+1$. We also mention that it has recently been shown~\cite{peled2018rigidity} that there are multiple maximal-entropy Gibbs measures when $d \ge Cq^{10}\log^3 q$ for some absolute constant $C>0$.


\subsection{Sampling a uniform $q$-coloring of $[n]^d$}

Suppose that we are to sample a random coloring according to $\mu_{x, n, d, q}$. One way to obtain an approximate such sample is by the Markov Chain Monte Carlo method: construct an ergodic Markov chain on $X_{x,n,d,q}$ whose stationary distribution is $\mu_{x, n, d, q}$, and run it for a long time. A common way to devise such a Markov chain is via the Metropolis--Hastings algorithm for an appropriate set of possible local changes. We mention a couple of such local changes, and address the corresponding ergodicity requirement, namely, whether one can transition between any two elements of $X_{x,n,d,q}$ via the local changes. 

Let us fix $d\geq 2$ and $q\geq 3$ for the following discusssion. We refer to \cite{chandgotia2018pivot} for some more details.

A \emph{boundary pivot move} is a pair $(x,y) \in X_q^d \times X_q^d$ such that they differ at most on a single site. We say that $X^d_q$ has the \emph{boundary pivot property} if for all $x\in X_q^d$, $n\in \N$, and  $y\in X_{x,n, d,q}$ there exists a sequence of boundary pivot moves from $x$ to $y$ contained in $X_{x,n,d,q}$. It is well-known that $X^d_q$ has the boundary pivot property when $q=3$ and it is quite easy to prove it for $q\geq 2d+2$ \cite[Proposition 3.4]{MR3552299}. For $d+2\leq q\leq 2d+1$, a weaker property holds:  A \emph{boundary $N$-pivot move} is a pair $(x,y) \in X_q^d \times  X_q^d$ such that they differ at most on a translate of $[N]^d$. We say that $X^d_q$ has the \emph{generalized boundary pivot property} if there exists $N\in\N$ such that for all $x\in X_q^d$, $n\in \N$, and $ y\in X_{x,n, d,q}$, there exists a sequence of boundary $N$-pivot moves from $x$ to $y$ contained in $X_{x,n,d,q}$. The space $X_{d}^q$ has the generalized boundary pivot property -- this is a consequence of the $n$-fillability property (which holds by Theorem~\ref{thm:list-coloring} and Proposition~\ref{prop:list_coloring}). The proof of this implication follows from the ideas in \cite[Proposition 0.1]{chandgotia2018pivot} (look also at the proof of \cite[Lemma 4.6]{MR3819997} for similar proof).

\begin{question}
For which $q$ and $d$ does $X^d_q$ satisfy the generalized boundary pivot property?
\end{question}

We remark that we do not know of any value of $(q,d)$ for which $X^d_q$ does not satisfy the generalized boundary pivot property. To apply the generalized boundary pivot property, we will still need to be able to sample a uniform coloring on a smaller, but still ostensibly large, box. It will then help to know if another property holds in this case in which such a sampling is not necessary. A \emph{Kempe move} is a pair $(x,y) \in X^d_q \times X^d_q$ such that $y$ is obtained from $x$ by swapping the colors on a bicolor component. We say that $X^d_q$ is \emph{Kempe move connected} if for all $x\in X^d_q$, $n\in \N$, and $y\in X_{x,n,d,q}$, there exists a sequence of Kempe moves from $x$ to $y$ contained in $X_{x,n,d,q}$.

\begin{question}
For which $q$ and $d$ is $X^d_q$ Kempe move connected?
\end{question}
Again, we are not aware of any $(q,d)$ for which $X^d_q$ is not Kempe move connected.

\subsection{Extension of a $q$-coloring of $\partial [n]^d$ to $[n]^d$ for $q\leq d+1$}

It was indicated after the end of the proof of Proposition \ref{prop:list_coloring} that when $3\leq q\leq d+1$, there exist $q$-colorings of $\partial [n]^d$ which do not have an extension into $[n]^d$.

\begin{question} Characterise $q$-colorings of $\partial [n]^d$ which have an extension to $[n]^d$. What is the complexity of determining (in terms of $n$, $q$ and $d$) whether an extension is possible?
\end{question}

For $q=3$, using the ``height functions'' formalism, the complexity is known to be of the order $n\log n$ for $d=2$ (it follows from arguments very similar to those in  \cite{MR3530972}) and $n^{2(d-1)}$ for higher dimensions.


\subsection{Optimizing the parameters of list-colorability.}
We would be interested to determine for which $n$ and $L$ it holds that $[n]^{d}$ is $L$-list-colorable. We have shown that $[n]^d$ is $L$-list-colorable when $n \ge d+2$ and $L = L^d_n$, but we have not tried to optimize $n$ or $L$. We raise two questions, one concerning the optimal $n$, and the other related to improving $L$.

It can be shown (e.g., by Theorem~\ref{thm:digraph-list-coloring}) that $[2]^2$ is $L_2^2$-list-colorable.
%
%
On the other hand, $[2]^3$ is not $L^2_3$-list-colorable. To see this, consider the lists $S:[2]^3\to 2^{\{0,1,2,3\}}$ given by Figure \ref{figure:counter}. The two possible list-colorings of $[2]^2\times \{1\}$ are $\begin{smallmatrix}1&0\\0&2\end{smallmatrix}$ and $\begin{smallmatrix}0&2\\1&0\end{smallmatrix}$, which are incompatible with those of 
$[2]^2\times \{2\}$, that are $\begin{smallmatrix}1&2\\2&3\end{smallmatrix}$ and $\begin{smallmatrix}2&3\\1&2\end{smallmatrix}$.

\begin{figure}[t]
\includegraphics[width=0.5\textwidth]{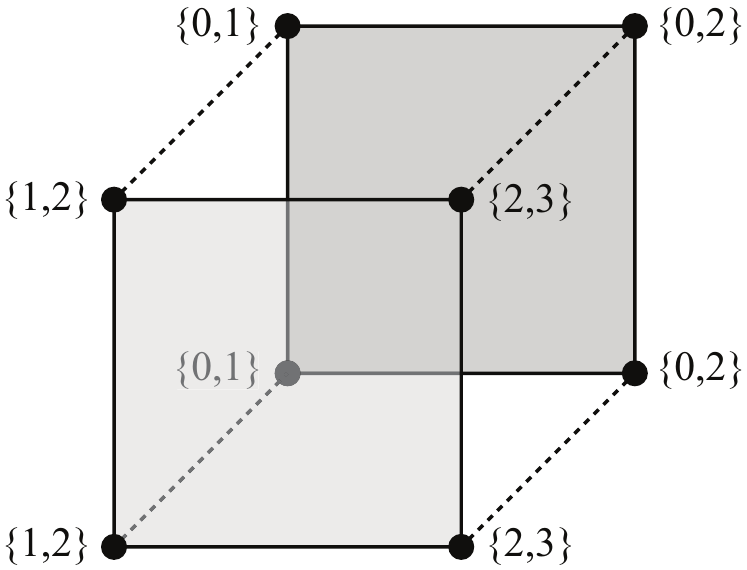}
\caption{The lists $S|_{[2]^2\times \{2\}}$ (front) and $S|_{[2]^2\times \{1\}}$ (back).}
\label{figure:counter}
\end{figure}

Recall that, due to Proposition \ref{prop:mono}, the property of being $L^d_n$-list-colorable is monotone in $n$, so it is natural to ask the following question.

\begin{question}
What is the smallest $n$ for which $[n]^d$ is $L^d_n$-list-colorable?
\end{question}

Regarding improvements to the function $L$, we note that it might be possible to decrease $L_n^d$ pointwise and still find that $[n]^d$ is list-colorable.
For example, in dimension $d=2$, it is not hard to show using Theorem~\ref{thm:digraph-list-coloring}, that $[n]^2$ is $L$-list-colorable for the function $L := \min \{L^2_n,3\}$ (simply orient the external face in a cycle and all other edges in the positive direction; note that $L=L^2_n \equiv 2$ if $n=2$, but otherwise $L \neq L^2_n$).

\begin{question}\label{q:L-list-colorability}
What is the smallest $k$ for which $[n]^d$ is $\min\{L^d_n,k\}$-list-colorable for large enough $n$?
\end{question}

Other functions $L$ which one may wonder about are the constant functions.
We say that $G$ is $k$-list-colorable if it is $L$-list-colorable for the constant function $L \equiv k$.
Though $k$-list-colorability is slightly less related to mixing properties, it is still an interesting combinatorial problem.

\begin{question}
What is the smallest $k$ for which $[n]^d$ is $k$-list-colorable for all $n$?
\end{question}

Using the Lov\'asz local lemma, one may show that $k \ge Cd / \log d$ suffices for some constant $C>0$ (this holds for any triangle-free graph with maximum degree $d$~\cite{johansson1996asymptotic}; see also~\cite{alon1999coloring}).
We remark that this easily leads to a proof that $X^d_q$ is $2$-fillable when $q \ge d + Cd/\log d$, and that this approach to showing fillability can also be useful for proper colorings of other graphs besides $\Z^d$.

\section*{Acknowledgements}

The first author was supported by NSF grant DMS-1855464, ISF grant 281/17, BSF grant 2018267 and the Simons Foundation. The second author was supported by CONICYT/FONDECYT Postdoctorado 3190191 and ERC Starting Grants 678520 and 676970. The third author has been funded by the European Research Council starting grant 678520 (LocalOrder), ISF grant nos. 1289/17, 1702/17 and 1570/17. The fourth author has been funded by European Research Council starting grant 678520 (LocalOrder).

\bibliographystyle{amsplain}
\bibliography{library}

\end{document}